\newcommand{\TODO}[1]{}
\begin{document}
\title{Sharp Inner Product Correlations for Hypercube Bijections}

\author{
  Ijay Narang\thanks{Georgia Institute of Technology. \texttt{inarang3@gatech.edu}.}
  \and
  Muchen Ju\thanks{Fudan University. \texttt{22300180095@m.fudan.edu.cn}.}
}

\date{July 25, 2025}
\maketitle

\begin{abstract}
We resolve a conjecture of Rob Morris concerning bijections on the hypercube. Specifically, we show that for any bijection \(f : \{-1,1\}^n \to \{-1,1\}^n\),
\[
\Pr_{x,y \in \{-1,1\}^n}\big[ \langle x,y \rangle \ge 0 \;\text{and}\; \langle f(x),f(y) \rangle \ge 0 \big] 
\;\;\ge\; \tfrac{1}{4} - O(1/\sqrt{n}),
\]
implying the same lower bound for the joint event under any two bijections. Our proof proceeds by applying the spectral decomposition of the Hamming association scheme, which allows us to reformulate the problem as a linear program over the Birkhoff polytope. This makes it possible to isolate the contribution of the nontrivial spectrum, which we show is asymptotically negligible, leaving the dominant contribution arising from the principal eigenvalue.

\end{abstract}

\section{Introduction}

The discrete hypercube \(\{-1,1\}^n\) is one of the central objects in combinatorics, discrete geometry, and the analysis of Boolean functions. Its algebraic and metric structure makes it a natural setting for studying extremal and probabilistic questions involving symmetry. In this paper, we study how much a bijection of the hypercube can distort the event that two random points have nonnegative inner product.

More specifically, for bijections \(f_1,f_2:\{-1,1\}^n\to\{-1,1\}^n\), consider
\[
\Pr_{x,y}\!\left[\langle f_1(x),f_1(y)\rangle\ge 0 \ \text{and}\ \langle f_2(x),f_2(y)\rangle\ge 0\right].
\]
If \(f_1\) and \(f_2\) are chosen independently and uniformly at random, then this probability is known to be $\frac14+o(1).$ Its maximum is also trivial, attained when \(f_1=f_2=\mathrm{id}\). The main question of this paper is therefore to understand the minimum over all bijections. Observe that since the above quantity depends only on the composition \(f_2\circ f_1^{-1}\), this reduces to the case of comparing an arbitrary bijection $f$ with the identity.

A natural conjecture is that the random-bijection value is in fact asymptotically extremal. In other words, one may ask whether every bijection \(f:\{-1,1\}^n\to\{-1,1\}^n\) satisfies
\[
\Pr_{x,y}\!\left[\langle x,y\rangle\ge 0 \ \text{and}\ \langle f(x),f(y)\rangle\ge 0\right]\ge \frac14-o(1).
\]
At PCMI 2025, Rob Morris conjectured that this is indeed the case. In this paper, we resolve this conjecture in the affirmative. In particular, we show

\begin{theorem} 
 \label{thm:main}
Let \( f : \{-1,1\}^n \to \{-1,1\}^n \) be any bijection on the hypercube. Then,
\[
\mathbb{P}_{x,y \in \{-1,1\}^n} \left[ \langle x, y \rangle \ge 0 \;\;\text{and}\;\; \langle f(x), f(y) \rangle \ge 0 \right]
\ge \frac{1}{4} - O(\frac{1}{\sqrt{n}}),
\]
where the probability is over uniformly random independent \( x, y \in \{-1,1\}^n \).
\end{theorem}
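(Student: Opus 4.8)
The plan is to rewrite the probability as a bilinear trace, split off the constant $\tfrac14$ coming from the principal (Perron) eigenvalue of the Hamming association scheme, and then show the remaining spectral contribution is $O(1/\sqrt n)$. Write $N=2^n$, let $A\in\{0,1\}^{N\times N}$ be the matrix $A_{x,y}=\mathbf 1[\langle x,y\rangle\ge 0]$, and let $P$ be the permutation matrix of $f$; then the probability equals $\tfrac1{N^2}\mathrm{tr}(AP^{\top}AP)$. Since $A$ depends only on Hamming distance it lies in the Bose--Mesner algebra of the Hamming scheme, so $A=\sum_{j=0}^n\lambda_j E_j$ where $E_j$ is the orthogonal projection onto the level-$j$ Fourier space $V_j$ (of dimension $m_j:=\binom nj$) and the $\lambda_j$ are partial sums of Krawtchouk polynomials. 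I will take $n$ odd (the even case differs only cosmetically, and in our favour): then $\lambda_j=2^{n-1}\widehat{\mathrm{Maj}_n}(S)$ for any $S$ with $|S|=j$, so $\lambda_0=2^{n-1}=N/2$, $\lambda_j=0$ for $j$ even, $\mathrm{sgn}\,\lambda_j=(-1)^{(j-1)/2}$ for $j$ odd, $\max_{j\ge 1}|\lambda_j|=|\lambda_1|=\binom{n-1}{(n-1)/2}=\Theta(N/\sqrt n)$, and $\sum_{j\ge 1}\lambda_j^2 m_j=\|A\|_F^2-\lambda_0^2=N\lambda_0-\lambda_0^2=N^2/4$.

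Expanding and using $E_jE_{j'}=\delta_{jj'}E_j$ gives $\mathrm{tr}(AP^{\top}AP)=\sum_{j,j'}\lambda_j\lambda_{j'}c_{j,j'}$ with $c_{j,j'}:=\mathrm{tr}(E_jP^{\top}E_{j'}P)=\|E_{j'}PE_j\|_F^2\ge 0$. The structural input is that $(c_{j,j'})$ is a nonnegative matrix with row sums $m_j$ and column sums $m_{j'}$ — a point of the Birkhoff polytope rescaled by the multiplicities $\binom nj$ (this uses $P\mathbf 1=\mathbf 1$ and $\sum_jE_j=I$) — and that the trivial eigenspace decouples: $E_0=\tfrac1N\mathbf 1\mathbf 1^{\top}$ forces $c_{0,0}=1$ and $c_{0,j'}=c_{j,0}=0$ for $j,j'\ge 1$. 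Hence
\[
\mathrm{tr}(AP^{\top}AP)=\lambda_0^2+R,\qquad R:=\sum_{j,j'\ge 1}\lambda_j\lambda_{j'}c_{j,j'},
\]
and since $\lambda_0^2/N^2=1/4$ the theorem reduces to showing that over this rescaled Birkhoff polytope one has $R\ge-O(N^2/\sqrt n)$.

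The crux — and the step I expect to be the real obstacle — is this last bound. The terms of $R$ with $\lambda_j\lambda_{j'}>0$ are nonnegative, so it suffices to bound $\Sigma:=\sum_{\lambda_j\lambda_{j'}<0}|\lambda_j\lambda_{j'}|\,c_{j,j'}$ by $O(N^2/\sqrt n)$. The naive Cauchy--Schwarz estimate $\Sigma\le\sum_{j\ge1}\lambda_j^2m_j=N^2/4$ is useless, as it merely matches the principal term; one must exploit the interaction between the polytope and the Krawtchouk spectrum. The relevant phenomenon is that the oppositely-signed eigenvalues of appreciable magnitude sit only on \emph{low-dimensional} eigenspaces — those at extreme levels $j$, near $1$ or (by the reflection symmetry $|\lambda_j|=|\lambda_{n+1-j}|$) near $n$, where $\binom nj$ is tiny — so the marginal constraints forbid routing more than a negligible amount of $c$-mass onto the ``bad'' cells, while the bulk levels, which carry essentially all of the $\binom nj$ mass, have eigenvalues of size $2^{-\Omega(n)}N$. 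I would make this quantitative using the identities $|\lambda_j|\sqrt{m_j}=2^{n-1}\sqrt{W_j}$ with $W_j:=\sum_{|S|=j}\widehat{\mathrm{Maj}_n}(S)^2$, $\sum_jW_j=1$, and the standard decay estimate $W_j=O(j^{-3/2})$ for $j\le n/2$, and then bound $\Sigma$ by a greedy transport argument over the polytope (equivalently, by exhibiting near-optimal dual potentials), treating the extreme and bulk ranges of $j$ separately. The delicate points I anticipate are handling the two regimes uniformly and correctly accounting for the extra Krawtchouk weight that the reflection symmetry places near level $n$. With $\Sigma=O(N^2/\sqrt n)$ in hand, $\mathrm{tr}(AP^{\top}AP)=\lambda_0^2+R\ge N^2/4-O(N^2/\sqrt n)$, which is the claim; for $n$ even the same argument applies with $\lambda_0=2^{n-1}+\tfrac12\binom n{n/2}>N/2$, only strengthening the bound.
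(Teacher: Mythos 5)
Your setup coincides with the paper's: both rewrite the probability as $\tfrac{1}{N^2}\operatorname{tr}(AP^{\top}AP)$, diagonalize $A$ via the Hamming association scheme, isolate $\lambda_0^2/N^2\ge 1/4$ using the decoupling $c_{0,j'}=c_{j,0}=0$, and reduce to bounding the nontrivial spectral contribution over a Birkhoff-type polytope. (Your constraint matrix $(c_{j,j'})$, with row sums $m_j$ and column sums $m_{j'}$, is precisely the level-collapsed version of the paper's doubly stochastic matrix $B\in\mathcal{D}_{N-1}$, collapsed using the fact that $\lambda_S$ depends only on $|S|$.) All of this is correct and matches the paper.

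The gap is the crux step you yourself flag: you never actually prove $\Sigma=O(N^2/\sqrt n)$. You observe that the naive Cauchy--Schwarz bound only recovers the main term, correctly identify the phenomenon to exploit (large oppositely-signed eigenvalues live at extreme levels with tiny $m_j$), recall the facts $|\lambda_j|\sqrt{m_j}=2^{n-1}\sqrt{W_j}$ and $W_j=O(j^{-3/2})$, and then say you \emph{would} finish ``by a greedy transport argument over the polytope (equivalently, by exhibiting near-optimal dual potentials),'' anticipating two ``delicate points.'' That is a research plan, not a proof: the transport/dual-potential step is exactly where all the difficulty lives, and nothing you've written guarantees it closes. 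Note in particular that bad pairs $(j,j')$ with $\lambda_j\lambda_{j'}<0$ are not confined to extreme $\times$ extreme cells — sign changes occur at every odd level — so the argument must handle interior $\times$ interior and extreme $\times$ interior cells as well, which is not addressed.

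The paper avoids this by not trying to bound $\Sigma$ directly: it relaxes to the full Birkhoff polytope over $2^{[n]}\setminus\{\emptyset\}$, invokes Birkhoff--von~Neumann to reduce the LP to a permutation, applies the rearrangement inequality to identify the exact minimizer $\tfrac{1}{N^2}\sum_i\lambda_i\lambda_{N-i}$, and then bounds that explicit sum using a closed Beta-function formula for the $\lambda_S$ (Propositions~\ref{prop:genfunc}--\ref{prop:counteigen}) together with Stirling, arriving at $-O(1/\sqrt n)$ after a Riemann-sum comparison with $\int_0^{1/2}dx/\sqrt{x(1-x)}$. If you want to salvage your route, the cleanest fix is to do the same thing: rather than inventing dual potentials, use Birkhoff--von~Neumann plus rearrangement to pin down the worst pairing exactly, and then estimate that one explicit sum; the $W_j$ asymptotics you cite are in the right spirit but the rearrangement step is what lets you avoid the transport argument entirely.
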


Our proof is spectral. We rewrite the probability in question as an optimization problem over permutation matrices, diagonalize the relevant kernel using the Hamming association scheme, and then pass to a linear-programming relaxation over the Birkhoff polytope. This isolates the contribution of the principal eigenspace, which yields the main term \(1/4\), and reduces the remainder to an analytic estimate on the nontrivial spectrum.

Beyond being a natural question to ask, Theorem \ref{thm:main} is also motivated by recent work of Balister et al (\cite{balister2024upperboundsmulticolourramsey}), which proved that the $r$-colour Ramsey number $R_r(k)$, defined as the minimum $n \in \mathbb{N}$ such that every $r$-colouring of the edges of the complete graph $K_n$ contains a monochromatic copy of $K_k$, satisfies  $R_r(k) \le e^{-\delta k^r} \, r^k$ for each fixed $r > 2$, where $\delta = \delta(r) > 0$ and $k$ is sufficiently large.

Key to their proof is the following geometric lemma:

\begin{lemma}[Lemma 3.1 in \cite{balister2024upperboundsmulticolourramsey}]
 Fix $n,r \in \mathbb{N}$, define $\beta = 3^{-4r}$ and $C = 4r^{3/2}$. Let $U$ and $U'$ be i.i.d.\ random variables taking values in a finite set $X$, and
let $\sigma_1, \ldots, \sigma_r : X \to \mathbb{R}^n$ be arbitrary functions.
There exists $\lambda \geq -1$ and $i \in [r]$ such that
\[
\Pr\!\left[
\langle \sigma_i(U), \sigma_i(U') \rangle \geq \lambda
\ \text{and}\
\langle \sigma_j(U), \sigma_j(U') \rangle \geq -1 \ \text{for all } j \ne i
\right]
\geq \beta e^{-C\sqrt{\lambda+1}}.
\]
\end{lemma}

Take $X$ to be the hypercube embedded in $\mathbb{R}^n$, $\sigma_i = f_i$ to be bijections on the hypercube, and note that  
\begin{align*}
&\mathbb{P}_{x,y \in \{-1,1\}^n} \left[\lor_{i=1}^{r} \langle f_i(x), f_i(y) \rangle = -1 \right] \\
 \leq & \sum_{i=1}^{r}\mathbb{P} \left[\langle f_i(x), f_i(y) \rangle = -1 \right] \\
 = &r\mathbb{P}_{x,y \in \{-1,1\}^n} \left[ \langle x, y \rangle = -1 \right] \\
 = &r\mathbb{P}_{x,y \in \{-1,1\}^n} \left[ y \text{ differ from } x \text{ in exactly } \frac{n+1}{2} \text{ coordinates}\right] \\
 \leq &r\frac{\binom{{\lfloor \frac{n}{2}\rfloor}}{n}}{2^{n}} =O(\frac{r}{\sqrt{n}})
\end{align*}
their result yields that for all collections of bijections $f_1, f_2, \cdots,f_r : \{-1, 1\}^n \rightarrow \{-1, 1\}^n$, \[\Pr_{x,y \in \{-1,1\}^n} \left[ \land_{i=1}^r \langle f_i(x), f_i(y) \rangle \ge 0 \right] \ge C^{\prime}-O(\frac{r}{\sqrt{n}})\] for some constant $C^{\prime} > 0$.

This places \Cref{thm:main} as a sharp asymptotic result for the $r = 2$ case. Before we present the full details of the proof, we discuss relevant background on the Hamming association scheme and the Birkhoff polytope.

\section{Preliminaries} \label{sec:prelim}
\subsection{Hamming Association Scheme}
Let \( \Omega = \{0,1\}^n \) denote the hypercube. The Hamming association scheme on \( \Omega \) is generated by the symmetric matrices \( A_0, A_1, \dots, A_n \in \mathbb{R}^{2^n \times 2^n} \), where each \( A_k \) is defined entrywise by
\[
(A_k)_{x,y} = \mathbf{1}[d(x,y) = k],
\]
with \( d(x,y) \) denoting the Hamming distance. The matrices \( A_k \) commute and are simultaneously diagonalizable; together they span the Bose–Mesner algebra of the scheme. The Hamming Association Scheme has been well-studied and has numerous applications in coding theory, for a more detailed treatment (see e.g. Chapter 10 of \cite{godsil2016}).
\\\\
\noindent A common orthonormal eigenbasis for all \( A_k \) is given by the Walsh–Hadamard vectors \( \bm{\chi}_S \in \mathbb{R}^{2^n} \), indexed by subsets \( S \subseteq [n] \). Note that these are defined by evaluations of the Fourier Characters:
\[
\bm{\chi}_S(x) := (-1)^{\sum_{i \in S} x_i}, \quad x \in \Omega,
\]
and we treat \( \bm{\chi}_S \) as a vector in \( \mathbb{R}^{2^n} \) whose coordinates are indexed by \( x \in \Omega \). These vectors form an orthonormal basis under the normalized inner product
\[
\langle \bm{f}, \bm{g} \rangle := \frac{1}{2^n} \sum_{x \in \Omega} f(x) g(x).
\]

Each matrix \( A_k \) acts diagonally in this basis. Specifically, the Walsh–Hadamard vector \( \bm{\chi}_S \) is an eigenvector of \( A_k \) with eigenvalue
\[
A_k \bm{\chi}_S = K_k(|S|) \bm{\chi}_S,
\]
where \( |S| \) is the cardinality of the subset \( S \), and \( K_k(i) \) is the degree-\( k \) Krawtchouk polynomial evaluated at \( i \), given by
\[
K_k(i) = \sum_{j=0}^k (-1)^j \binom{i}{j} \binom{n - i}{k - j}.
\]

Letting \( U \in \mathbb{R}^{2^n \times 2^n} \) denote the orthogonal matrix whose columns are the normalized Walsh–Hadamard vectors \( \bm{\chi}_S / \sqrt{2^n} \), we obtain the spectral decomposition
\[
A_k = U \Lambda_k U^\top,
\]
where \( \Lambda_k \in \mathbb{R}^{2^n \times 2^n} \) is diagonal with entries \( (\Lambda_k)_{S,S} = K_k(|S|) \). 

\subsection{LP Geometry and the Birkhoff Polytope}

A linear program (LP) optimizes a linear objective over a polyhedron defined by linear constraints. The standard form of a linear program is:
\[
\begin{aligned}
\text{minimize} \quad & c^\top x \\
\text{subject to} \quad & Ax = b, \\
& x \ge 0,
\end{aligned}
\]
where \( c \in \mathbb{R}^d \), \( A \in \mathbb{R}^{m \times d} \), and \( b \in \mathbb{R}^m \). The feasible region \( \mathcal{P} = \{ x \in \mathbb{R}^d : Ax = b,\ x \ge 0 \} \) is a convex polytope when bounded.

\begin{fact}[Extreme Point Principle] \label{fact: epp}
Let \( \mathcal{P} \subseteq \mathbb{R}^d \) be a polytope and \( c \in \mathbb{R}^d \). Then the minimum of the linear program \( \min_{x \in \mathcal{P}} c^\top x \) is attained at an extreme point (vertex) of \( \mathcal{P} \).
\end{fact}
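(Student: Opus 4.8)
The plan is to deduce the principle from a single structural fact: a bounded polyhedron is the convex hull of its finitely many extreme points. First I would record this input — assuming $\mathcal{P} \neq \emptyset$ (otherwise there is nothing to prove), the classical representation theorem for polytopes (Minkowski–Weyl) gives a finite nonempty vertex set $v_1, \dots, v_m$ with $\mathcal{P} = \mathrm{conv}\{v_1, \dots, v_m\}$. I would cite this rather than reprove it, as it is the only nontrivial ingredient.

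Granting that, the rest is a one-line averaging argument. Fix any $x \in \mathcal{P}$ and write $x = \sum_{k=1}^{m} \theta_k v_k$ with $\theta_k \ge 0$ and $\sum_{k=1}^{m} \theta_k = 1$. Linearity of $z \mapsto c^\top z$ gives
\[
c^\top x \;=\; \sum_{k=1}^{m} \theta_k\,(c^\top v_k) \;\ge\; \Big(\min_{1 \le k \le m} c^\top v_k\Big)\sum_{k=1}^{m}\theta_k \;=\; \min_{1 \le k \le m} c^\top v_k ,
\]
the inequality using only $\theta_k \ge 0$. Hence every feasible point has objective value at least $\min_k c^\top v_k$, so $\inf_{x \in \mathcal{P}} c^\top x \ge \min_k c^\top v_k$; the reverse inequality is immediate since each $v_k \in \mathcal{P}$. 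Therefore $\min_{x \in \mathcal{P}} c^\top x = \min_k c^\top v_k$, an honest minimum over a finite nonempty set, attained at any vertex $v_{k^*}$ realizing it.

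The only substantive step is the convex-hull representation, so there is essentially no obstacle beyond choosing how self-contained to be. If a fully elementary treatment is preferred, I would instead argue by descent on faces: by compactness of $\mathcal{P}$ and continuity of the objective a minimizer $x^*$ exists, and if $x^*$ is not a vertex it lies in the relative interior of a segment $\{x^* + t d : |t| \le \varepsilon\} \subseteq \mathcal{P}$ with $d \neq 0$, along which $c^\top(\cdot)$ is affine and hence nonincreasing in at least one direction; moving maximally in that direction reaches a face of strictly smaller dimension without increasing the objective, and iterating terminates at a vertex. I would keep the convex-combination proof as the main one for brevity.
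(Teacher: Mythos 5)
Your proof is correct. The paper states this as a well-known \emph{Fact} without proof, so there is no in-paper argument to compare against; your argument via the Minkowski--Weyl representation of a polytope as the convex hull of its finitely many vertices, followed by the standard convexity/averaging step, is exactly the textbook proof one would cite here. The alternative descent-on-faces argument you sketch is also valid and gives a route that avoids invoking the full representation theorem, at the cost of some bookkeeping about face dimensions; either would be an acceptable way to make the paper self-contained on this point. One minor remark: you should state explicitly that a polytope is by definition bounded (and you take it nonempty), so that the vertex set is finite and nonempty and the minimum over it is genuinely attained --- you do handle the empty case, but it is worth saying that boundedness is what rules out an unattained infimum.
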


\noindent In our setting, the relevant polytope is the Birkhoff polytope, which is defined as the set of all doubly stochastic matrices. More formally, we denote this polytope by
 \( \mathcal{D}_n \subseteq \mathbb{R}^{n \times n} \) where

\[
\mathcal{D}_n := \left\{ X \in \mathbb{R}^{n \times n} \;\middle|\; X_{ij} \ge 0,\ \sum_{j=1}^n X_{ij} = 1,\ \sum_{i=1}^n X_{ij} = 1 \right\}.
\]

The Birkhoff-von Neumann theorem (Theorem 8.6 of \cite{schrijver1986theory}) gives the following useful characterization of $\mathcal{D}_n$.

\begin{fact} \label{fact:bvn_thm}
The vertices of \( \mathcal{D}_n \) are exactly the set of \( n \times n \) permutation matrices.
\end{fact}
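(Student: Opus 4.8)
The plan is to prove the two inclusions separately: first that every $n\times n$ permutation matrix is a vertex (extreme point) of $\mathcal{D}_n$, and then that conversely every vertex of $\mathcal{D}_n$ is a permutation matrix. Throughout I use that, for a polytope, ``vertex'' and ``extreme point'' coincide.

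For the first direction, suppose a permutation matrix $P_\sigma$ were written as $\tfrac12(X+Y)$ with $X,Y\in\mathcal{D}_n$. Since every entry of $P_\sigma$ lies in $\{0,1\}$ while $X,Y$ have nonnegative entries, each position with $(P_\sigma)_{ij}=0$ forces $X_{ij}=Y_{ij}=0$. In each row the unique position $j=\sigma(i)$ with $(P_\sigma)_{ij}=1$ must then carry the entire row-sum, so $X_{i,\sigma(i)}=Y_{i,\sigma(i)}=1$ as well. Hence $X=Y=P_\sigma$, so $P_\sigma$ cannot be a nontrivial convex combination of points of $\mathcal{D}_n$ and is therefore a vertex.

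For the converse I would prove the stronger statement that \emph{every} $X\in\mathcal{D}_n$ is a convex combination of permutation matrices; the vertex characterization follows immediately, since a vertex that is a convex combination of points of $\mathcal{D}_n$ (here permutation matrices) must, by extremality, coincide with one of them. The decomposition I prove by induction on the number $z(X)$ of nonzero entries of $X$, noting $z(X)\ge n$ always (each row sums to $1$, hence is nonzero), with base case $z(X)=n$ in which each row has a single nonzero entry equal to $1$, i.e. $X$ is itself a permutation matrix. For the inductive step, form the bipartite graph on rows and columns with an edge $(i,j)$ whenever $X_{ij}>0$, and verify Hall's condition: for any set $S$ of rows, $\sum_{i\in S}\sum_j X_{ij}=|S|$, and all this mass is supported on columns in the neighbourhood $N(S)$, whose total mass is exactly $|N(S)|$, so $|N(S)|\ge|S|$. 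By Hall's theorem the graph has a perfect matching, i.e.\ a permutation $\sigma$ with $X_{i,\sigma(i)}>0$ for all $i$. Set $c:=\min_i X_{i,\sigma(i)}$; since every entry of a doubly stochastic matrix lies in $[0,1]$ we have $c\in(0,1]$. If $c=1$ then all matched entries equal $1$ and the row-sum constraint forces $X=P_\sigma$. Otherwise $X':=\tfrac{1}{1-c}\bigl(X-cP_\sigma\bigr)$ has nonnegative entries (by choice of $c$), row and column sums $\tfrac{1-c}{1-c}=1$, and $z(X')\le z(X)-1$, since the entry attaining the minimum becomes $0$ while no new nonzeros are created. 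By the inductive hypothesis $X'=\sum_k\mu_k P_{\tau_k}$ as a convex combination, whence $X=cP_\sigma+(1-c)\sum_k\mu_k P_{\tau_k}$ is the required convex combination.

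The main obstacle is the inductive step for the converse, and specifically the extraction of a permutation from the support of $X$: this is exactly where Hall's theorem (equivalently the K\"onig--Egerv\'ary theorem) enters, and one must check carefully that the residual matrix $X'$ stays in $\mathcal{D}_n$ and that $z(X')<z(X)$ so the induction terminates. The remaining pieces --- that permutation matrices are extreme points, and that an extreme point expressed as a convex combination of permutation matrices must equal one of them --- are routine.
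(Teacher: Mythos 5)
Your proof is correct: it is the standard proof of the Birkhoff--von Neumann theorem, establishing that permutation matrices are extreme points via the forcing argument on zero entries, and the converse via the stronger decomposition statement proved by induction on the support size, with Hall's theorem extracting a permutation from the support of a doubly stochastic matrix. The paper itself offers no proof --- it records this as a well-known fact --- so there is nothing to compare against; your argument is complete and would serve as a valid self-contained justification. The only points worth double-checking, which you handle correctly, are that the residual matrix $X'$ remains doubly stochastic (row and column sums rescale to $1$ and nonnegativity follows from the choice of $c$ as the minimum matched entry) and that the final step from ``convex combination of permutation matrices'' to ``equals a permutation matrix'' uses extremality by grouping all but one term of the combination.
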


\noindent Note that the result implies that linear optimization over \( \mathcal{D}_n \) reduces to optimization over permutations, which is critical to our proof.

\section{Proof of \Cref{thm:main}} \label{sec:thm}
We begin by translating
\begin{equation} \label{eq:probability}
\Pr\left[ \langle x, y \rangle \ge 0 \;\text{and}\; \langle f(x), f(y) \rangle \ge 0 \right],
\end{equation}
into the language of linear algebra, which will allow us to establish a universal lower bound on this probability for all bijections \( f \). We fix an enumeration of the hypercube: let \( x_1, \dots, x_N \in \{-1,1\}^n \), where \( N = 2^n \), represent all possible hypercube vectors. Define the matrix \( M \in \mathbb{R}^{N \times N} \) by
\[
M_{i,j} = \mathbf{1}\left[\langle x_i, x_j \rangle \ge 0\right],
\]
so that \( M \) encodes whether the inner product between two hypercube vectors is nonnegative. The bijection \( f \) induces a permutation matrix \( P \in \mathbb{R}^{N \times N} \) such that \( P_{ij} = \mathbf{1}[f(x_i) = x_j] \).
\\\\
\noindent Then the condition $\langle x_i, x_j \rangle \geq 0$ and $\langle f(x), f(y) \rangle \geq 0$ holds if and only if \( M_{i,j} = M_{f(i),f(j)} = 1 \) for a random pair \( i,j \). This yields that:
\[
\Pr\left[ \langle x, y \rangle \ge 0 \text{ and } \langle f(x), f(y) \rangle \ge 0 \right] = \frac{1}{N^2} \sum_{i,j=1}^N M_{i,j} \cdot M_{f(i),f(j)} = \frac{1}{N^2} \operatorname{Tr}(P^\top M P M).
\]

\noindent Thus, lower bounding \eqref{eq:probability} over all bijections is equivalent to lower bounding
\begin{equation} \label{eq:optimization}
\min_{P \in \mathcal{P}} \frac{1}{N^2} \operatorname{Tr}(P^\top M P M),
\end{equation}
where \( \mathcal{P} \) denotes the set of \( N \times N \) permutation matrices.
\\\\
\noindent To analyze \eqref{eq:optimization}, we first fully characterize the spectrum of the matrix $M$ via its relation to the Hamming Association Scheme. This is done in the following Lemma:

\begin{lemma} \label{lem:inner_prod_spectrum}
Let \( M \in \mathbb{R}^{2^n \times 2^n} \) be the matrix with entries given by $M_{x,y} = \mathbf{1}\left[\langle x, y \rangle \ge 0\right]$,
for \( x, y \in \{-1,1\}^n \). Then \( M \) is diagonalized by the Walsh–Hadamard basis, with eigenvalues (indexed by subsets $S \subseteq [n]$)
\[
\lambda_S = \sum_{d=0}^{\lfloor n/2 \rfloor} K_d(|S|),
\]
where \( K_d \) is the degree-\( d \) Krawtchouk polynomial. Each \( \lambda_S \) has multiplicity \( \binom{n}{|S|} \).
\end{lemma}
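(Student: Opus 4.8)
The plan is to exploit the fact that $M$ lies in the Bose--Mesner algebra of the Hamming scheme, so that its spectral decomposition is already handed to us by the preliminaries. First I would observe that the event $\langle x, y\rangle \ge 0$ is, for $x,y \in \{-1,1\}^n$, exactly the event that the Hamming distance $d(x,y)$ (after the standard identification of $\{-1,1\}^n$ with $\{0,1\}^n$) is at most $n/2$: if $x$ and $y$ differ in $d$ coordinates then $\langle x,y\rangle = (n-d) - d = n - 2d$, which is nonnegative precisely when $d \le n/2$. Hence
\[
M = \sum_{d=0}^{\lfloor n/2\rfloor} A_d,
\]
where $A_d$ is the $d$-th adjacency matrix of the Hamming scheme.

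Next I would invoke the spectral decomposition recalled in \Cref{sec:prelim}: each $A_d$ is diagonalized by the Walsh--Hadamard basis, with $A_d \bm{\chi}_S = K_d(|S|)\,\bm{\chi}_S$. Since $M$ is a sum of the $A_d$ over $d = 0, \dots, \lfloor n/2\rfloor$ and these all share the eigenbasis $\{\bm{\chi}_S\}_{S \subseteq [n]}$, the vector $\bm{\chi}_S$ is an eigenvector of $M$ with eigenvalue
\[
\lambda_S = \sum_{d=0}^{\lfloor n/2\rfloor} K_d(|S|).
\]
Equivalently, in the notation of the preliminaries, $M = U\big(\sum_{d=0}^{\lfloor n/2\rfloor}\Lambda_d\big)U^\top$, which is the claimed diagonalization.

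For the multiplicity statement, I would use the standard fact that $K_d(|S|)$ depends on $S$ only through $|S|$, so $\lambda_S = \lambda_T$ whenever $|S| = |T|$; thus the eigenvalue attached to level $i$ (i.e. to all $S$ with $|S| = i$) occurs with multiplicity at least $\binom{n}{i}$, and since $\sum_{i=0}^n \binom{n}{i} = 2^n = N$ exhausts the whole space, the eigenspace for level $i$ is exactly the span of $\{\bm{\chi}_S : |S| = i\}$, of dimension $\binom{n}{|S|}$. (The only caveat is that two different levels $i \ne j$ could in principle give the same numerical value of $\lambda$; this does not affect the statement as written, which assigns multiplicity $\binom{n}{|S|}$ to the eigenvector $\bm{\chi}_S$ rather than asserting that distinct levels yield distinct eigenvalues.)

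I do not expect a genuine obstacle here; the only mildly delicate point is bookkeeping the sign/parity convention relating $\langle x,y\rangle \ge 0$ to $d(x,y) \le n/2$ and making sure the endpoint $d = \lfloor n/2 \rfloor$ is handled correctly (in particular when $n$ is even, the case $d = n/2$ corresponds to $\langle x,y \rangle = 0$, which is included by the ``$\ge 0$''). Everything else is a direct transcription of the Bose--Mesner structure recorded in \Cref{sec:prelim}.
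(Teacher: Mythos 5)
Your proof is correct and follows exactly the same route as the paper: rewrite $\langle x,y\rangle \ge 0$ as $d_H(x,y) \le n/2$, write $M = \sum_{d=0}^{\lfloor n/2\rfloor} A_d$, and read off the eigenvalues from the shared Walsh--Hadamard eigenbasis of the Hamming scheme. Your parenthetical caveat about distinct levels possibly yielding the same numerical eigenvalue is a fair and careful observation (the paper's later \Cref{prop:sorteigen} in fact shows this coincidence does occur), but the paper glosses over it in the lemma statement just as you note.
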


\begin{proof}
For \( x, y \in \{-1,1\}^n \), the inner product satisfies \( \langle x, y \rangle = n - 2d_H(x, y) \). Thus,
\[
M_{x,y} = \mathbf{1}[\langle x, y \rangle \ge 0] = \mathbf{1}[d_H(x, y) \le n/2],
\]
so we may write
\[
M = \sum_{d = 0}^{\lfloor n/2 \rfloor} A_d = U \left( \sum_{d=0}^{\lfloor n/2 \rfloor} \Lambda_d \right) U^\top = U \Lambda U^\top
\]
where $A_d$ corresponds to the distance $d$ matrix of the Hamming Association Sceheme. The result follows from the fact that $\Lambda$ is diagonal with 
$\Lambda_{S, S} = \sum_{d=0}^{\lfloor n/2 \rfloor} K_d(|S|)$.
\end{proof}

Applying \Cref{lem:inner_prod_spectrum} to \eqref{eq:optimization}, we have:

\begin{align}
\frac{1}{N^2} \operatorname{Tr}(P^\top M P M)
&= \frac{1}{N^2} \operatorname{Tr}(P^\top U \Lambda U^\top P U \Lambda U^\top) \notag \\
&= \frac{1}{N^2} \operatorname{Tr}\left((U^\top P^\top U) \Lambda (U^\top P U) \Lambda\right) \notag \\
&= \frac{1}{N^2} \sum_{S,T \subseteq [n]} \lambda_S \lambda_T \left(U^\top P U\right)_{S,T}^2. \label{eq:perm-diagonal}
\end{align}

Furthermore, a direct implication of \Cref{lem:inner_prod_spectrum} of is the following
\[
\lambda_{\emptyset} = \sum_{k = 0}^{\lfloor n/2 \rfloor} K_k(0) = \sum_{k = 0}^{\lfloor n/2 \rfloor} \binom{n}{k} \ge 2^{n-1}
\]

Note that the bulk of the spectrum of $M$ lies in its largest eigenvalue. As such, to understand the above sum \eqref{eq:perm-diagonal}, we require an understanding of the coefficients that correspond to the terms $\lambda_S\lambda_T$ where one or both of $S$ and $T$ equal the empty set. This is given by the following lemma.

\begin{lemma} \label{lem:matrix_fact}
Let \( U \in \mathbb{R}^{N \times N} \), with \( N = 2^n \), be the orthogonal Walsh--Hadamard matrix whose columns are \( \bm{\chi}_S / \sqrt{N} \) for \( S \subseteq [n] \), and let \( P \in \mathbb{R}^{N \times N} \) be any permutation matrix. If $R := U^\top P U$, then:
\begin{align*}
R_{\emptyset,\emptyset} &= 1, \\
R_{\emptyset,T} &= R_{T,\emptyset} = 0 \quad \text{for all } T \ne \emptyset.
\end{align*}
\end{lemma}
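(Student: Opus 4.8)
The plan is to exploit the fact that the all-ones vector $\mathbf{1} \in \mathbb{R}^N$ is, up to normalization, exactly the Walsh--Hadamard vector $\bm{\chi}_{\emptyset}$, and that permutation matrices fix $\mathbf{1}$. First I would recall that the column of $U$ indexed by $\emptyset$ is $\bm{\chi}_{\emptyset}/\sqrt{N} = \mathbf{1}/\sqrt{N}$, since $\bm{\chi}_{\emptyset}(x) = (-1)^0 = 1$ for every $x$. Writing $e_{\emptyset}$ for the standard basis vector in $\mathbb{R}^N$ corresponding to the coordinate $\emptyset$, this says $U e_{\emptyset} = \mathbf{1}/\sqrt{N}$.

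Next I would compute $R e_{\emptyset} = U^\top P U e_{\emptyset} = U^\top P (\mathbf{1}/\sqrt{N})$. Since $P$ is a permutation matrix, it merely reorders the coordinates of $\mathbf{1}$, so $P\mathbf{1} = \mathbf{1}$. Hence $R e_{\emptyset} = U^\top (\mathbf{1}/\sqrt{N}) = U^\top U e_{\emptyset} = e_{\emptyset}$, using that $U$ is orthogonal. Reading off coordinates of the vector equation $R e_{\emptyset} = e_{\emptyset}$ gives $R_{\emptyset,\emptyset} = 1$ and $R_{T,\emptyset} = 0$ for all $T \ne \emptyset$.

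For the remaining claim $R_{\emptyset,T} = 0$, I would observe that $R^\top = U^\top P^\top U = U^\top P^{-1} U$, and $P^{-1}$ is again a permutation matrix, so the argument above applies verbatim to $R^\top$: its $\emptyset$-column is $e_{\emptyset}$, i.e. $(R^\top)_{T,\emptyset} = R_{\emptyset,T} = 0$ for $T \ne \emptyset$ (and $(R^\top)_{\emptyset,\emptyset} = R_{\emptyset,\emptyset} = 1$, consistent with the above). Alternatively one can note directly that $R$ is doubly stochastic as a product of orthogonal conjugation that preserves row/column sums, but the cleanest route is the two-sided eigenvector argument.

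There is essentially no obstacle here: the only thing to be careful about is matching the indexing convention (that the $\emptyset$-indexed column/row of $U$ is the normalized all-ones vector) and the fact that $P$ and $P^\top$ both fix $\mathbf{1}$ because permutation matrices and their transposes are again permutation matrices. I would state the proof in three short lines along the lines above.
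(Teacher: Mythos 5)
Your proof is correct. It takes a somewhat different route from the paper's: the paper proves the lemma by direct coordinate computation, writing out $R_{\emptyset,T} = \sum_{a,b} U_{a,\emptyset} P_{a,b} U_{b,T}$, using $\sum_a P_{a,b}=1$ to collapse the sum, and then invoking the vanishing of the nontrivial character sum $\sum_x (-1)^{\langle T,x\rangle} = 0$ to get $R_{\emptyset,T}=0$ (and an analogous computation for $R_{\emptyset,\emptyset}=1$). You instead package the whole thing as a single eigenvector statement: $\mathbf{1}$ is the common fixed vector of all permutation matrices, it is $\sqrt{N}\,Ue_\emptyset$, and orthogonality of $U$ immediately gives $Re_\emptyset = e_\emptyset$ and $R^\top e_\emptyset = e_\emptyset$. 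Both arguments ultimately rest on the same two facts — $P\mathbf{1}=\mathbf{1}$ and the orthogonality relations of the Walsh--Hadamard characters — but yours avoids unwinding the character sum explicitly (orthogonality of $U$ already encodes it) and dispenses with the second argument by passing to $P^\top$, so it is a bit cleaner and more conceptual, while the paper's is more elementary and self-contained. Your parenthetical remark that $R$ is doubly stochastic is in fact established and used later in the paper (in the proof of Lemma 4), but it is not what is needed here: double stochasticity alone would not pin the $\emptyset$ row and column down to the indicator vector, so the eigenvector argument you lead with is indeed the right one.
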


\begin{proof}
Since \( U_{x,S} = \frac{1}{\sqrt{N}} (-1)^{\langle S, x \rangle} \), we have:
\[
R_{\emptyset,T} = \sum_{a,b} U_{a,\emptyset} P_{a,b} U_{b,T}
= \sum_{a,b} \frac{1}{\sqrt{N}} P_{a,b} \cdot \frac{1}{\sqrt{N}} (-1)^{\langle T, x_b \rangle}
= \frac{1}{N} \sum_{b} (-1)^{\langle T, x_b \rangle},
\]
where we used that \( \sum_a P_{a,b} = 1 \) since \( P \) is a permutation matrix.

If \( T \ne \emptyset \), then the character \( x \mapsto (-1)^{\langle T, x \rangle} \) is nontrivial and satisfies:
\[
\sum_{x \in \{0,1\}^n} (-1)^{\langle T, x \rangle} = 0.
\]
Thus, \( R_{\emptyset,T} = 0 \). The same argument shows \( R_{T,\emptyset} = 0 \) by symmetry.

Finally, for \( R_{\emptyset,\emptyset} \), we compute:
\[
R_{\emptyset,\emptyset} = \sum_{a,b} U_{a,\emptyset} P_{a,b} U_{b,\emptyset}
= \sum_{a,b} \frac{1}{\sqrt{N}} P_{a,b} \cdot \frac{1}{\sqrt{N}} = \frac{1}{N} \sum_{a,b} P_{a,b} = 1,
\]
since \( P \) is a permutation matrix and hence has exactly one 1 in each row and column.
\end{proof}

Applying \Cref{lem:matrix_fact}, we have that:

\begin{align*}
\eqref{eq:perm-diagonal}
&= \frac{1}{N^2} \lambda_\emptyset^2 \left(U^\top P U\right)_{\emptyset,\emptyset}^2 
+ \frac{1}{N^2} \sum_{\substack{S = \emptyset,\; T \ne \emptyset}} \lambda_S \lambda_T \left(U^\top P U\right)_{S,T}^2
+ \frac{1}{N^2} \sum_{\substack{S \ne \emptyset,\; T = \emptyset}} \lambda_S \lambda_T \left(U^\top P U\right)_{S,T}^2 \notag \\
&\quad + \frac{1}{N^2} \sum_{\substack{S \ne \emptyset,\; T \ne \emptyset}} \lambda_S \lambda_T \left(U^\top P U\right)_{S,T}^2 \notag \\
&= \frac{1}{N^2} \lambda_\emptyset^2 
+ \frac{1}{N^2} \sum_{\substack{S \ne \emptyset, T \ne \emptyset}} \lambda_S \lambda_T \left(U^\top P U\right)_{S,T}^2 \\
&\ge \frac{1}{4} 
+ \frac{1}{N^2} \sum_{\substack{S \ne \emptyset, T \ne \emptyset}} \lambda_S \lambda_T \left(U^\top P U\right)_{S,T}^2.
\end{align*}

\noindent So, we have that over all hypercube bijections $f: \{-1, +1\}^n \rightarrow \{-1, +1\}^n$
\begin{align*}
\Pr\left[ \langle x, y \rangle \ge 0 \;\text{and}\; \langle f(x), f(y) \rangle \ge 0 \right]
&\ge \min_{P \in \mathcal{P}} \frac{1}{N^2} \operatorname{Tr}(P^\top M P M) \\
&\ge \frac{1}{4} 
+ \min_{P \in \mathcal{P}} \frac{1}{N^2} \sum_{\substack{S \ne \emptyset \\ T \ne \emptyset}} \lambda_S \lambda_T \left(U^\top P U\right)_{S,T}^2.
\end{align*}

\begin{lemma}\label{lem:rem1}
Let \( M \) be as defined above, with eigendecomposition \( M = U \Lambda U^\top \), where \( \Lambda = \operatorname{diag}(\lambda_S) \), and \( \mathcal{P} \) the set of \( N \times N \) permutation matrices. Then
\[
 \min_{P \in \mathcal{P}} \frac{1}{N^2} \sum_{\substack{S \ne \emptyset \\ T \ne \emptyset}} \lambda_S \lambda_T \left(U^\top P U\right)_{S,T}^2 \geq -o(1).
\]
\end{lemma}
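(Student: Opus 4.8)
The plan is to carry out the three-step reduction indicated in the outline. First, write $R := U^\top P U$, which is orthogonal; by \Cref{lem:matrix_fact} its $\emptyset$-row and $\emptyset$-column are the standard basis vector $e_\emptyset$, so $R$ is block-diagonal with a trivial $1\times1$ block and an $(N-1)\times(N-1)$ orthogonal block indexed by the nonempty subsets. Consequently the matrix $B$ with $B_{S,T} := R_{S,T}^2$ (for $S,T\neq\emptyset$) is the entrywise square of an orthogonal matrix, hence doubly stochastic. Relaxing the minimization over all of $\mathcal D_{N-1}$ and invoking \Cref{fact: epp} together with \Cref{fact:bvn_thm}, the minimum of the linear objective $\tfrac1{N^2}\sum_{S,T\neq\emptyset}\lambda_S\lambda_T B_{S,T}$ over $\mathcal D_{N-1}$ is attained at a permutation matrix, so
\[
\min_{P\in\mathcal P}\frac1{N^2}\sum_{S,T\neq\emptyset}\lambda_S\lambda_T R_{S,T}^2 \;\geq\; \min_{\pi}\frac1{N^2}\sum_{S\neq\emptyset}\lambda_S\lambda_{\pi(S)},
\]
the minimum being over bijections $\pi$ of $2^{[n]}\setminus\{\emptyset\}$. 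Finally, the rearrangement inequality identifies the right-hand side with $r_n := \tfrac1{N^2}\sum_{i=1}^{N-1}\mu_i\mu_{N-i}$, where $\mu_1\geq\cdots\geq\mu_{N-1}$ is the list (with multiplicities) of the eigenvalues $\lambda_S$ over $S\neq\emptyset$. It remains to show $r_n\geq -o(1)$.

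To analyze $r_n$ I first extract a usable closed form for the eigenvalues. From the Krawtchouk generating function $\sum_d K_d(k)z^d = (1-z)^k(1+z)^{n-k}$ and the coefficient identity $\sum_{d\le m}[z^d]F(z) = [z^m]\,F(z)/(1-z)$, \Cref{lem:inner_prod_spectrum} gives, for $k=|S|\geq1$,
\[
\lambda_k = [z^{\lfloor n/2\rfloor}]\,(1-z)^{k-1}(1+z)^{n-k}.
\]
Writing this as a Cauchy integral and substituting $1+e^{i\theta}=2\cos(\theta/2)e^{i\theta/2}$ and $1-e^{i\theta}=-2i\sin(\theta/2)e^{i\theta/2}$, the integral collapses, after taking real parts and substituting $\phi=\theta/2$, to a multiple of $\int_0^{\pi/2}\sin^{a}\phi\cos^{b}\phi\,d\phi=\tfrac12 B(\tfrac{a+1}2,\tfrac{b+1}2)$. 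Bookkeeping over the two parities of $n$ and of $k$, this expresses $\lambda_k$, up to a sign that is essentially $(-1)^{\lfloor(k-1)/2\rfloor}$, as a fixed power of $2$ over $\pi$ times a Beta value $B(\alpha_k,\beta_k)$ with $\alpha_k+\beta_k=\lfloor n/2\rfloor+1$ independent of $k$. Since $\log\Gamma$ is convex, $B(\alpha_k,\beta_k)=\Gamma(\alpha_k)\Gamma(\beta_k)/\Gamma(\lfloor n/2\rfloor+1)$ is unimodal in $k$, largest at $k\in\{1,n\}$ and smallest near $k=n/2$; in particular $|\lambda_k|\le\lambda_1=\binom{n-1}{\lfloor n/2\rfloor}=\Theta(N/\sqrt n)$ for every $k\geq1$. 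Together with the multiplicities $\binom nk$, this produces the sorted list $\mu_1\geq\cdots\geq\mu_{N-1}$ explicitly enough to evaluate $r_n$.

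Finally I bound $r_n$ from below. Only the opposite-sign pairs in $\sum_i\mu_i\mu_{N-i}$ can contribute negatively; grouping them by which eigenvalue supplies each factor, the align-pairing of the (decreasing) positive eigenvalues against the (decreasing) absolute values of the negative ones decomposes into $O(n)$ "phases", in the $p$-th of which a positive value is paired against a negative value, with combined multiplicity $O\!\big(\binom{n}{2p-1}\big)$. Using the closed form and Stirling, the product of the two values in phase $p$ is $O\!\big(\lambda_1^2\,(2p-1)!!\,(2p-3)!!\,/\,n^{2p-1}\big)$, so the total absolute contribution of phase $p$ is
\[
O\!\left(\binom{n}{2p-1}\cdot\frac{\lambda_1^2\,(2p-1)!!\,(2p-3)!!}{n^{2p-1}}\right)=O\!\left(\lambda_1^2\,\frac{\binom{2p-2}{p-1}}{4^{p-1}}\right)=O\!\left(\frac{\lambda_1^2}{\sqrt p}\right).
\]
Summing over $p=O(n)$ bounds the total negative contribution by $O(\lambda_1^2\sqrt n)=O(N^2/\sqrt n)$, whence $r_n\geq-O(1/\sqrt n)=-o(1)$, as required. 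The main obstacle is precisely this last estimate: the positive mass of the nontrivial spectrum is concentrated in the single value $\lambda_1$ (with its $n$ copies) while the negative mass is spread across $\lambda_3$ (with $\binom n3$ copies), $\lambda_7$, $\dots$, and a blunt instrument such as Cauchy--Schwarz only yields $r_n\geq-\Omega(1)$; recovering the $o(1)$ bound forces one to track the magnitudes (via the Beta/Gamma form and Stirling) and multiplicities of the eigenvalues in tandem, which is exactly the role of the phase decomposition.
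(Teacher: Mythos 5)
Your proposal follows the paper's proof very closely: you make the identical reduction to the Birkhoff polytope via \Cref{lem:matrix_fact}, invoke the Birkhoff--von Neumann theorem and rearrangement exactly as the paper does, and reach the Beta-function closed form for the eigenvalues by the same generating-function/contour-integral route. The bound on $r_n$ is also the same in spirit (sort the distinct absolute values, bound the multiplicity of the $p$-th phase by $O\bigl(\binom{n}{2p-1}\bigr)$, and sum), and your final answer $r_n \ge -O(1/\sqrt n)$ matches the paper's.

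One intermediate step as written is not literally correct and should be flagged. You claim the product of the two values in phase $p$ is $O\bigl(\lambda_1^2\,(2p-1)!!\,(2p-3)!!/n^{2p-1}\bigr)$. The exact ratio is
\[
\frac{\mu_{2p-1}\mu_{2p+1}}{\mu_1^2}
=\frac{(2p-3)!!\,(2p-1)!!}{\prod_{j=1}^{p-1}(n-2j+1)\;\prod_{j=1}^{p}(n-2j+1)},
\]
and replacing the denominators by $n^{2p-1}$ is a valid asymptotic only when $p=o(n)$; for $p=\Theta(n)$ it overshoots by an exponential factor (roughly $(e/2)^{2p}$), so the stated ``$O$'' is false in that regime. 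Simultaneously, your approximation $\binom{n}{2p-1}\approx n^{2p-1}/(2p-1)!$ is too large by a compensating exponential factor for the same $p$, and the two errors cancel to give the correct $\binom{2p-2}{p-1}/4^{p-1}=O(1/\sqrt p)$. The conclusion is right, but a rigorous write-up needs to either combine the Beta ratios and the binomial into a single Stirling-controlled expression (which is effectively what the paper does, arriving at the Riemann-sum form $\tfrac{1}{m}\sum_k \tfrac{1}{\sqrt{(k/m)(1-k/m)}}$) or handle small and large $p$ separately; two separately false estimates that happen to cancel are not a proof. Also a bookkeeping slip: the sign of $\lambda_k$ is $(-1)^{\lfloor k/2\rfloor}$, not $(-1)^{\lfloor (k-1)/2\rfloor}$, though this does not affect the phase structure you use.
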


\begin{proof}
    
We begin by rewriting our optimization problem as follows($\circ$ denote the Hadamard product of matrices):

\begin{gather}
\begin{aligned}
\min_{P \in \mathcal{P}} \;\; \frac{1}{N^2}
\sum_{\substack{S \ne \emptyset \\ T \ne \emptyset}} 
\lambda_S \lambda_T \left(U^\top P U\right)_{S,T}^2
&= \min_{P \in \mathcal{P}} \;\; \frac{1}{N^2}
\sum_{\substack{S \ne \emptyset \\ T \ne \emptyset}} 
\lambda_S \lambda_T \left[(U^\top P U) \circ (U^\top P U)\right]_{S,T} \\
&= \left\{
\begin{aligned}
\min_{B} \quad & \frac{1}{N^2} 
\sum_{\substack{S \ne \emptyset \\ T \ne \emptyset}} \lambda_S \lambda_T B_{S,T} \\
\text{s.t.} \quad & B \in \mathcal{B}' := 
\left\{ (U^\top P U) \circ (U^\top P U) : P \in \mathcal{P} \right\}
\end{aligned}
\right.
\end{aligned}
\label{eq:rewritten_opt}
\end{gather}

Our strategy for lower bounding this quantity is to consider a sufficient LP relaxation of \eqref{eq:rewritten_opt}. We do this by observing that $\mathcal{B}' \subseteq \mathcal{D}_N$, where $\mathcal{D}_N$ denotes the Birkhoff polytope. This follows from the fact that since both \( U \) and \( P \) are orthogonal, the matrix \( U^\top P U \) is also orthogonal. In particular, for any matrix $B \in \mathcal{B}'$, we have that for each row \( i \),
\[
\sum_{j=1}^N B_{ij} = \sum_{j=1}^N (U^\top P U)_{ij}^2 = \left\| (U^\top P U)_{i\cdot} \right\|_2^2 = 1,
\]
and similarly, for each column \( j \),
\[
\sum_{i=1}^N B_{ij} = \sum_{i=1}^N (U^\top P U)_{ij}^2 = \left\| (U^\top P U)_{\cdot j} \right\|_2^2 = 1.
\]
Each entry \( B_{ij} \ge 0 \) since it is a square. Therefore, \( B \) is nonnegative with all row and column sums equal to 1, implying \( B \in \mathcal{D}_N \). Furthermore, from \Cref{lem:matrix_fact}, we know that every matrix $B \in \mathcal{B}'$ has $B_{\emptyset, \emptyset} = 1$ and $B_{\emptyset, T} = B_{T, \emptyset} = 0$. Thus, 

\[
\mathcal{B}' \subseteq 
\left\{
\begin{bmatrix}
1 & \mathbf{0}^\top \\
\mathbf{0} & D
\end{bmatrix}
\;\middle|\;
D \in \mathcal{D}_{N-1}
\right\}
= \mathcal{D}_N^{(0)}\]

\noindent Defining $\mathcal{S} := 2^{[n]} \setminus \emptyset$, we now have the following LP relaxation-based lower bound for \eqref{eq:rewritten_opt}

\begin{align*}
\eqref{eq:rewritten_opt}
&\geq \min_{B \in \mathcal{D}_N^{(0)}} \frac{1}{N^2}
\sum_{\substack{S \subseteq [n] \\ S \ne \emptyset}} \sum_{\substack{T \subseteq [n] \\ T \ne \emptyset}} \lambda_S \lambda_T B_{S,T} \\
&= \min_{D \in \mathcal{D}_{\mathcal{S}}} \frac{1}{N^2}
\sum_{S \in \mathcal{S}} \sum_{T \in \mathcal{S}} \lambda_S \lambda_T D_{S,T}
\quad  \\
&= \min_{\pi \in \text{Sym}(\mathcal{S})} \frac{1}{N^2}
\sum_{S \in \mathcal{S}} \lambda_S \lambda_{\pi(S)}
\end{align*}
where the last equality follows from the fact that LPs achieve their objective at their vertices, and that the vertices of the Birkhoff polytope are the permutation matrices (\Cref{fact: epp} and \Cref{fact:bvn_thm}). Let $\lambda_1 \geq \lambda_2, \cdots \geq \lambda_{N - 1}$ be the ordering of the eigenvalues $\{\lambda_S\}_{S \in \mathcal{S}}$. By the rearrangement inequality, we have 
\[
\min_{\pi \in \text{Sym}(\mathcal{S})} \frac{1}{N^2}
\sum_{S \in \mathcal{S}} \lambda_S \lambda_{\pi(S)} = \frac{1}{N^2} \sum_{i = 1}^{N-1} \lambda_i\lambda_{N-i}
\]
Hence, \Cref{lem:rem1} is reduced to \Cref{lem:rem2}, which we will prove in the next section.
\begin{lemma}\label{lem:rem2}
    Let $\lambda_1 \geq \lambda_2 \geq ... \geq \lambda_{N-1}$ be the multiset $\{\lambda_S: \emptyset \neq S \subseteq 2^{[n]}\}$ sorted in descending order, here $N=2^n$.
        \[r_n := \frac{1}{N^2}\sum_{k=1}^{N}\lambda_i\lambda_{N-i}\]
        Then $r_n \geq  -o(1)$ as $n \to \infty$
\end{lemma}
\end{proof}

\section{Bounding the remainder term}
In this section, we prove \Cref{lem:rem2}, thus finishing the proof of \Cref{thm:main}. For simplicity, we assume $n=4m$ for some $m \in \mathbb{N}$. The proof works for all $n$ with minor adjustment. Our proof utilizes the following well-known facts, which we state without proof.

\begin{fact}[5.16 in \cite{graham1994}]\label{lem:altsum}
$\sum_{j=0}^{D}(-1)^{j}\binom{n}{j}=(-1)^D\binom{n-1}{D}$
    \end{fact}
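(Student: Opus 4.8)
The plan is to prove the identity $\sum_{j=0}^{D}(-1)^{j}\binom{n}{j}=(-1)^D\binom{n-1}{D}$ by a telescoping argument built from Pascal's rule. The key observation is that each summand on the left decomposes as a difference of consecutive terms of a related sequence: applying $\binom{n}{j} = \binom{n-1}{j} + \binom{n-1}{j-1}$ gives
\[
(-1)^j\binom{n}{j} = (-1)^j\binom{n-1}{j} - (-1)^{j-1}\binom{n-1}{j-1}.
\]
Summing this over $j = 0, 1, \dots, D$ makes the right-hand side telescope, so that only the endpoint at $j = D$, namely $(-1)^D\binom{n-1}{D}$, and the endpoint at $j = 0$, namely $-(-1)^{-1}\binom{n-1}{-1} = 0$, survive. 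Reading off the result yields the claim.

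As an alternative one can run a short induction on $D$. The base case $D = 0$ reads $\binom{n}{0} = (-1)^0\binom{n-1}{0}$, i.e. $1 = 1$. For the inductive step, add $(-1)^{D+1}\binom{n}{D+1}$ to both sides of the inductive hypothesis and factor out $(-1)^{D+1}$; the claim reduces to $\binom{n}{D+1} - \binom{n-1}{D} = \binom{n-1}{D+1}$, which is exactly Pascal's rule. Both routes are entirely elementary, so there is essentially no real obstacle here; the only point requiring a moment of care is the treatment of the degenerate boundary term (respectively, the base case), which must be interpreted via the convention $\binom{n-1}{-1} = 0$.

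A third and arguably slickest option proceeds by generating functions, in the same spirit as the coefficient-extraction techniques used later in the paper: since $\sum_{j \ge 0}(-1)^j\binom{n}{j}x^j = (1-x)^n$ and dividing a formal power series by $(1-x)$ replaces its coefficient sequence by the sequence of partial sums, one obtains
\[
\sum_{j=0}^{D}(-1)^j\binom{n}{j} \;=\; [x^D]\,\frac{(1-x)^n}{1-x} \;=\; [x^D](1-x)^{n-1} \;=\; (-1)^D\binom{n-1}{D}.
\]
I would present the telescoping proof as the primary argument, since it is fully self-contained and needs no bookkeeping about formal series, and simply remark that the generating-function computation gives the same conclusion.
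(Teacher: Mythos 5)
Your proof is correct. The paper states this identity as a well-known fact without proof, so there is nothing to compare against; your telescoping argument via Pascal's rule (with the convention $\binom{n-1}{-1}=0$) is a complete and standard justification, and the inductive and generating-function variants you sketch are equally valid.
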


\begin{fact}[Theorem IV.4 in \cite{flajolet2009}]\label{lem:coeff}
If $P(z)$ is an analytic function in a region containing the unit disk, then the coefficient of $z^k$ admits the integral expression $[z^k]P(z) = \frac{1}{2\pi}\int_{0}^{2\pi}e^{-ik\theta}P(e^{i\theta})d\theta$
    \end{fact}
The Gamma Function $\Gamma(x)$, in connection with the Beta Function $B(x,y)$, are well-studied special functions in analysis. Their definitions can be found in Chapter 8 of \cite{rudin1976}. Listed below are several facts used in our proof.
    \begin{fact}[8.21 in \cite{rudin1976}]\label{lem:trigbeta}
        $\mathrm{B}(x, y) = 2 \int_0^{\pi/2} (\sin \theta)^{2x - 1} (\cos \theta)^{2y - 1} \, d\theta$
    \end{fact}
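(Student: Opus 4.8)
The plan is to prove the identity by a direct change of variables, starting from the standard Euler integral representation of the Beta function. Recall that for real (or complex) parameters with $\Re(x), \Re(y) > 0$ one has $\mathrm{B}(x,y) = \int_0^1 t^{x-1}(1-t)^{y-1}\,dt$, and this integral converges absolutely precisely in that parameter range. That range is also exactly where the right-hand side of the claimed identity converges: near $\theta = 0$ the integrand $(\sin\theta)^{2x-1}(\cos\theta)^{2y-1}$ behaves like $\theta^{2x-1}$, and near $\theta = \pi/2$ like $(\pi/2 - \theta)^{2y-1}$, so integrability at the two endpoints is equivalent to $\Re(x) > 0$ and $\Re(y) > 0$ respectively. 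Hence both sides are simultaneously well-defined, and it suffices to transform one into the other.

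The substitution I would use is $t = \sin^2\theta$, which is a smooth, strictly increasing bijection of $[0,\pi/2]$ onto $[0,1]$ sending $\theta = 0$ to $t = 0$ and $\theta = \pi/2$ to $t = 1$, with $dt = 2\sin\theta\cos\theta\,d\theta$. Under this map $1 - t = \cos^2\theta$, so $t^{x-1} = (\sin\theta)^{2x-2}$ and $(1-t)^{y-1} = (\cos\theta)^{2y-2}$; substituting and collecting the extra factor $2\sin\theta\cos\theta$ from $dt$ gives
\[
\mathrm{B}(x,y) = \int_0^{\pi/2} (\sin\theta)^{2x-2}(\cos\theta)^{2y-2}\cdot 2\sin\theta\cos\theta\,d\theta = 2\int_0^{\pi/2} (\sin\theta)^{2x-1}(\cos\theta)^{2y-1}\,d\theta,
\]
which is the stated identity. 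The only justification needed is the change-of-variables theorem for a monotone $C^1$ substitution applied to a (possibly improper) Riemann integral, which is routine; since the application in the next section only uses real arguments, no analyticity considerations enter.

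There is, in effect, no genuine obstacle here — the "hard part" is merely bookkeeping: tracking the exponents $2x-1$ and $2y-1$ correctly through the substitution and verifying the endpoint behaviour so that the improper integral is handled legitimately. If one preferred not to take the Euler integral as the definition of $\mathrm{B}$, an equivalent route is to start from $\mathrm{B}(x,y) = \Gamma(x)\Gamma(y)/\Gamma(x+y)$, write $\Gamma(x)\Gamma(y) = \iint_{(0,\infty)^2} u^{x-1}v^{y-1}e^{-(u+v)}\,du\,dv$, change to the coordinates $u = \rho\cos^2\theta$, $v = \rho\sin^2\theta$ with $\rho \in (0,\infty)$, $\theta \in (0,\pi/2)$, integrate out $\rho$ against the resulting $\Gamma(x+y)$, and use the symmetry $\mathrm{B}(x,y) = \mathrm{B}(y,x)$ to obtain the displayed form. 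Either way the proof is a two-line computation, which is why the statement is legitimately invoked as a standard fact.
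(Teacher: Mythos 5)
Your proof is correct: the substitution $t=\sin^2\theta$ in the Euler integral is the standard derivation, and your endpoint-convergence check is sound. The paper simply cites this as a well-known fact without proof, so there is nothing to compare against; your argument fills that in exactly as one would expect.
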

    \begin{fact}[8.19 in \cite{rudin1976}]\label{lem:logconvex}
        $\Gamma(x)$ is log-convex. As a result, when $x+y$ is fixed, $B(x,y)=\frac{\Gamma(x)\Gamma(y)}{\Gamma(x+y)}$ is unimodal and minimizes when $x=y$.
    \end{fact}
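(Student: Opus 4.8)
Both assertions are classical; here is the route I would take. For the log-convexity of $\Gamma$ on $(0,\infty)$, I would work directly from the Euler integral $\Gamma(x) = \int_0^\infty t^{x-1}e^{-t}\,dt$ and apply H\"older's inequality. For $x,y>0$ and $\lambda\in(0,1)$, write $t^{\lambda x+(1-\lambda)y-1}e^{-t} = \big(t^{x-1}e^{-t}\big)^{\lambda}\big(t^{y-1}e^{-t}\big)^{1-\lambda}$ and apply H\"older with conjugate exponents $1/\lambda$ and $1/(1-\lambda)$; this yields $\Gamma(\lambda x+(1-\lambda)y)\le \Gamma(x)^{\lambda}\Gamma(y)^{1-\lambda}$, i.e.\ $\log\Gamma$ is convex. (Equivalently, differentiating the Weierstrass product gives $(\log\Gamma)''(x) = \sum_{n\ge 0}(x+n)^{-2}>0$, which is strict convexity.)

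For the Beta statement, fix $s=x+y$ and set $h(x):=B(x,s-x)=\Gamma(x)\Gamma(s-x)/\Gamma(s)$ for $x\in(0,s)$. Then $\log h(x)=\log\Gamma(x)+\log\Gamma(s-x)-\log\Gamma(s)$ is the sum of a convex function of $x$ and its reflection across $x=s/2$, hence convex; and since $B(x,y)=B(y,x)$ we get $h(x)=h(s-x)$, so $h$ is symmetric about $x=s/2$. A convex function symmetric about an interior point attains its minimum there, is non-increasing to its left and non-decreasing to its right; because $\log\Gamma$ is in fact strictly convex, $h$ is strictly decreasing on $(0,s/2]$ and strictly increasing on $[s/2,s)$, with unique minimum at $x=y=s/2$. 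Finally $h=\exp(\log h)$ is itself convex (the increasing convex map $\exp$ composed with a convex function), which is exactly the valley-shaped ("unimodal") behavior asserted.

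An alternative that sidesteps log-convexity of $\Gamma$ altogether is to invoke Fact~\ref{lem:trigbeta}: substituting $y=s-x$ gives $h(x)=2\int_0^{\pi/2}(\cos\theta)^{2s-2}(\tan\theta)^{2x-1}\,d\theta$, and for each fixed $\theta\in(0,\pi/2)$ the integrand is convex in $x$ (its second $x$-derivative equals the integrand times $(2\log\tan\theta)^2\ge 0$), so $h$ is convex by differentiation under the integral sign, and the same symmetry argument pins the minimum at $x=y$. There is no substantive obstacle here; the only points needing a line of care are the justification of differentiating under the integral sign (dominated convergence on compact subsets of $(0,s)$, since $t^{x-1}e^{-t}$ and its $x$-derivatives are dominated locally uniformly in $x$) and making explicit that "unimodal" is meant in the valley sense, with the minimum — not a maximum — occurring at the symmetric point $x=y$.
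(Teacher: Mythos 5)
The paper states this fact without proof (it appears in the list of ``well-known facts, which we state without proof''), so there is no argument of the authors' to compare against. Your proposal is a correct and complete justification: H\"older's inequality on the Euler integral is the standard route to log-convexity of $\Gamma$, and writing $\log B(x,s-x)=\log\Gamma(x)+\log\Gamma(s-x)-\log\Gamma(s)$ as a convex function symmetric about $x=s/2$ correctly pins the minimum at $x=y$ and gives the monotonicity on either side that the paper actually uses later (in sorting the $\mu_{2k+1}$). Your clarification that ``unimodal'' must be read in the valley sense is also the right reading of how the fact is applied. No gaps.
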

    \begin{fact}[Section 12.33 of \cite{whittaker2021}]\label{lem:stirling}
    $\Gamma(z) = \sqrt{\frac{2\pi}{z}} \left( \frac{z}{e} \right)^z \left( 1 + O\left( \frac{1}{z} \right) \right)$
    \end{fact}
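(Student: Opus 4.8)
The plan is to derive this classical identity by Laplace's method applied to Euler's integral \(\Gamma(z+1) = \int_0^\infty t^z e^{-t}\,dt\), which I will carry out for real \(z \to +\infty\) (the only regime used in this paper; the same argument extends to \(|\arg z| \le \pi/2 - \delta\) but that is not needed). Write the integrand as \(e^{\varphi(t)}\) with \(\varphi(t) = z\log t - t\); then \(\varphi\) is strictly concave, increasing on \((0,z)\) and decreasing on \((z,\infty)\), with a unique maximum at \(t = z\) where \(\varphi(z) = z\log z - z\) and \(\varphi''(z) = -1/z\). Substituting \(t = z + s\sqrt z\), so that \(dt = \sqrt z\,ds\) and the peak sits at \(s = 0\), a Taylor expansion of \(\varphi\) about \(t = z\) gives \(\varphi(z + s\sqrt z) = \varphi(z) - \tfrac{s^2}{2} + \tfrac{s^3}{3\sqrt z} - \tfrac{s^4}{4z} + \cdots\) for \(|s| < \sqrt z\), whence
\[
\Gamma(z+1) \;=\; \sqrt z\, z^z e^{-z} \int_{-\sqrt z}^{\infty} \exp\!\Big( -\tfrac{s^2}{2} + \tfrac{s^3}{3\sqrt z} - \tfrac{s^4}{4z} + \cdots \Big)\, ds .
\]
It then suffices to show the last integral equals \(\sqrt{2\pi}\,(1 + O(1/z))\): dividing by \(z\) gives \(\Gamma(z) = \Gamma(z+1)/z = \sqrt{2\pi}\, z^{z-1/2} e^{-z}(1 + O(1/z)) = \sqrt{2\pi/z}\,(z/e)^z(1 + O(1/z))\), which is exactly the claim.

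To evaluate that integral I would split the range of \(s\) at \(|s| = z^{1/8}\). On the tail \(|s| > z^{1/8}\): in the band \(t \in [z/2, 2z]\) the bound \(-z/t^2 = \varphi''(t)\) is bounded away from \(0\), so \(\varphi(z + s\sqrt z) - \varphi(z) \le -c\,s^2\) for a universal \(c > 0\) and the integrand is at most \(e^{-c s^2}\); outside that band one uses that \(t^z e^{-t} = e^{z\log(t/z) - (t-z)}\, z^z e^{-z}\) together with the elementary inequality \(\log x \le x/e\) to get exponential decay in \(t\). Either way the tail contributes \(e^{-\Omega(z^{1/4})}\), negligible next to \(1/z\). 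On the central window \(|s| \le z^{1/8}\) every exponent term beyond \(-s^2/2\) is uniformly \(O(z^{-1/8})\), so I can write \(\exp(-s^2/2 + \eta_z(s)) = e^{-s^2/2}\big(1 + \eta_z(s) + O(\eta_z(s)^2)\big)\) with \(\eta_z(s) = \tfrac{s^3}{3\sqrt z} + \rho_z(s)\) and \(|\rho_z(s)| = O(s^4/z)\) on the window. Integrating against \(e^{-s^2/2}\): the leading correction \(\tfrac{s^3}{3\sqrt z}\) is odd and integrates to exactly \(0\) over the symmetric window, while the contributions of \(\rho_z\) and of \(\eta_z^2\) are each \(O(1/z)\) by the moment bounds \(\int e^{-s^2/2}s^4\,ds, \int e^{-s^2/2}s^6\,ds < \infty\); extending the window back to all of \(\mathbb{R}\) costs another \(e^{-\Omega(z^{1/4})}\). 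Hence the integral equals \(\sqrt{2\pi} + O(1/z) = \sqrt{2\pi}(1 + O(1/z))\), as needed.

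The substance of the proof — and essentially the only place care is required — is the second paragraph: one must verify that the Taylor remainder of \(\varphi\) is controlled \emph{uniformly} on the growing window \(|s| \le z^{1/8}\) (so that the final error is genuinely \(O(1/z)\) rather than \(O(1/\sqrt z)\), the improvement being precisely the parity cancellation of the cubic term), and that the outer-tail estimate and the window-matching error are both super-polynomially small. Everything else is bookkeeping. If one prefers to bypass integral asymptotics, an alternative is to apply the Euler–Maclaurin formula to \(\sum_{k=0}^{m-1}\log(z+k)\) and use \(\Gamma(z+m)/\Gamma(z) = \prod_{k=0}^{m-1}(z+k)\) with \(m \to \infty\); this also yields the \((z-\tfrac12)\log z - z\) main term and an \(O(1/z)\) correction, but the additive constant \(\tfrac12\log(2\pi)\) must then be pinned down separately (e.g.\ via the Wallis product), making it marginally less self-contained than the Laplace route.
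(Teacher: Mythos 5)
The paper states this fact without proof (it is listed among the ``well-known facts, which we state without proof''), so there is no in-paper argument to compare against; judged on its own, your Laplace-method derivation is correct and complete for real \(z \to +\infty\), which is the only regime the paper uses (Stirling is invoked only for the Beta-function asymptotics with real half-integer arguments). The expansion of \(\varphi(t) = z\log t - t\) about \(t = z\), the choice of window \(|s| \le z^{1/8}\), the parity cancellation of the cubic term that upgrades the error from \(O(1/\sqrt z)\) to \(O(1/z)\), and the two tail estimates are all sound. One presentational quibble: in your displayed formula for \(\Gamma(z+1)\) the exponent is written as the Taylor series \(-\tfrac{s^2}{2} + \tfrac{s^3}{3\sqrt z} - \cdots\) over the full range \(s \in (-\sqrt z, \infty)\), even though that series only converges for \(|s| < \sqrt z\) (i.e.\ \(0 < t < 2z\)); since your tail analysis for \(t \ge 2z\) works directly with \(t^z e^{-t}\) rather than the series, this is harmless, but the integrand in that display should really be written as \(\exp(\varphi(z+s\sqrt z) - \varphi(z))\).
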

    \begin{corollary}\label{cor:asympbeta}
     \[B(x,y)=\frac{\sqrt{2\pi} \, x^{x - 1/2} y^{y - 1/2}}{(x + y)^{x + y - 1/2}}\frac{(1+O(\frac{1}{x}))(1+O(\frac{1}{y}))}{1+O(\frac{1}{x+y})}\]
    In particular, there exist constant $C>0$ such that $\frac{1}{C}\frac{\sqrt{2\pi} \, x^{x - 1/2} y^{y - 1/2}}{(x + y)^{x + y - 1/2}} \leq|B(x,y)| \leq C\frac{\sqrt{2\pi} \, x^{x - 1/2} y^{y - 1/2}}{(x + y)^{x + y - 1/2}}$ for all $x,y \geq \frac{1}{2}$
    \end{corollary}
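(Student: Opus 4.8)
The statement to prove is \Cref{cor:asympbeta}: deriving the asymptotic formula
\[
B(x,y)=\frac{\sqrt{2\pi}\, x^{x-1/2} y^{y-1/2}}{(x+y)^{x+y-1/2}}\cdot\frac{(1+O(1/x))(1+O(1/y))}{1+O(1/(x+y))}
\]
together with the uniform two-sided bound for $x,y\ge \tfrac12$, directly from \Cref{lem:stirling} (Stirling for $\Gamma$) and \Cref{lem:logconvex} (the identity $B(x,y)=\Gamma(x)\Gamma(y)/\Gamma(x+y)$). The proof is essentially a substitution-and-bookkeeping argument, so the plan is short.

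\textbf{Main computation.} First I would write $B(x,y)=\Gamma(x)\Gamma(y)/\Gamma(x+y)$ and substitute the Stirling expansion $\Gamma(z)=\sqrt{2\pi/z}\,(z/e)^z(1+O(1/z))$ for each of the three Gamma factors, with $z=x$, $z=y$, and $z=x+y$ respectively. The $e$-powers combine as $e^{-x}e^{-y}/e^{-(x+y)}=1$ and so cancel exactly. The algebraic prefactors combine as
\[
\frac{\sqrt{2\pi/x}\cdot\sqrt{2\pi/y}}{\sqrt{2\pi/(x+y)}}\cdot\frac{x^x y^y}{(x+y)^{x+y}}
=\sqrt{2\pi}\cdot\frac{x^{x-1/2}y^{y-1/2}}{(x+y)^{x+y-1/2}},
\]
which is exactly the claimed leading term. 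The error factors multiply to $(1+O(1/x))(1+O(1/y))/(1+O(1/(x+y)))$, giving the first display.

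\textbf{From the asymptotic to the uniform bound.} To get the stated two-sided inequality valid for all $x,y\ge\tfrac12$ (not merely asymptotically), I would argue as follows. For $x,y$ both bounded below by $\tfrac12$, each error factor $1+O(1/x)$, $1+O(1/y)$, $1+O(1/(x+y))$ is a continuous positive function of $(x,y)$; on the region where both $x$ and $y$ are bounded above by some large constant $T$, $B(x,y)$ and the leading term $L(x,y):=\sqrt{2\pi}\,x^{x-1/2}y^{y-1/2}(x+y)^{-(x+y-1/2)}$ are both continuous and strictly positive on a compact set, so their ratio is bounded above and below by positive constants there. On the complementary region (where $\max(x,y)\ge T$ with $T$ chosen large enough that all the $O(\cdot)$ error factors lie in, say, $[\tfrac12,2]$), the asymptotic formula already furnishes the constants. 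Taking $C$ to be the max of the finitely many constants produced gives a single $C$ valid for all $x,y\ge\tfrac12$. I would note that the absolute value $|B(x,y)|$ in the statement is a harmless formality since $B(x,y)>0$ for $x,y>0$.

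\textbf{Expected obstacle.} There is no serious obstacle; the only thing requiring care is making the passage from the pointwise asymptotic (which a priori holds ``as $x,y\to\infty$'') to a genuinely \emph{uniform} bound on the closed region $x,y\ge\tfrac12$ — one must invoke compactness on the bounded part and verify that the Stirling error term is uniform in $z$ on $[T,\infty)$, which it is since the $O(1/z)$ in \Cref{lem:stirling} is a single asymptotic series with a uniform implied constant on $z\ge\tfrac12$. I would also double-check that no degeneracy occurs at the boundary $x=\tfrac12$ or $y=\tfrac12$, where $\Gamma(1/2)=\sqrt\pi$ is finite and nonzero, so the ratio stays bounded.
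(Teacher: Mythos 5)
Your proposal is correct and matches the paper's (implicit) derivation: the paper states this as an unproved corollary of \Cref{lem:stirling}, obtained exactly by substituting Stirling's formula into $B(x,y)=\Gamma(x)\Gamma(y)/\Gamma(x+y)$ and cancelling the exponential factors as you do. Your additional care in upgrading the pointwise asymptotic to a uniform two-sided bound on $x,y\ge\tfrac12$ (via compactness on the bounded region plus uniformity of the Stirling error) is a reasonable way to justify the second assertion, which the paper leaves implicit.
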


\subsection{Explicit formulae for eigenvalues}
\begin{proposition}\label{prop:genfunc}
$\lambda_S = [x^{2m}](1-x)^{|S|-1}(1+x)^{4m-|S|}$
    \end{proposition}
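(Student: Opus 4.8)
\textbf{Proof proposal for Proposition~\ref{prop:genfunc}.}

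The plan is to start from the closed form $\lambda_S = \sum_{d=0}^{\lfloor n/2\rfloor} K_d(|S|)$ given by \Cref{lem:inner_prod_spectrum} (with $n = 4m$, so $\lfloor n/2\rfloor = 2m$) and to recognize the partial sum of Krawtchouk polynomials as a coefficient extraction from a product of binomials. The key identity is the generating function for Krawtchouk polynomials: $\sum_{d\ge 0} K_d(i)\, x^d = (1-x)^i (1+x)^{n-i}$. Writing $k := |S|$, this gives $\sum_{d\ge 0} K_d(k)\, x^d = (1-x)^k(1+x)^{4m-k}$, so $K_d(k) = [x^d](1-x)^k(1+x)^{4m-k}$, and hence $\lambda_S = \sum_{d=0}^{2m} [x^d](1-x)^k(1+x)^{4m-k}$.

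The remaining task is to convert the partial sum $\sum_{d=0}^{2m}[x^d]Q(x)$ into a single coefficient extraction. I would use the standard trick of multiplying by the geometric series: for any polynomial $Q(x) = \sum_j a_j x^j$, we have $\sum_{d=0}^{D} a_d = [x^D]\frac{Q(x)}{1-x}$, since $\frac{1}{1-x} = \sum_{\ell\ge 0} x^\ell$ and the coefficient of $x^D$ in the product picks up exactly $\sum_{d=0}^{D} a_d$. Applying this with $Q(x) = (1-x)^k(1+x)^{4m-k}$ and $D = 2m$ yields
\[
\lambda_S = [x^{2m}] \frac{(1-x)^k(1+x)^{4m-k}}{1-x} = [x^{2m}](1-x)^{k-1}(1+x)^{4m-k},
\]
which is exactly the claimed formula with $k = |S|$. (When $S = \emptyset$, i.e. $k=0$, the expression $(1-x)^{-1}(1+x)^{4m}$ is still a well-defined formal power series and the coefficient extraction recovers $\lambda_\emptyset = \sum_{d=0}^{2m}\binom{4m}{d}$, consistent with the earlier computation; but the proposition is really only needed for $S \ne \emptyset$, where $k-1 \ge 0$ and everything is polynomial.)

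I expect no serious obstacle here: the only points requiring care are (i) citing or briefly rederiving the Krawtchouk generating function $\sum_d K_d(i) x^d = (1-x)^i(1+x)^{n-i}$ — which follows immediately from the definition $K_d(i) = \sum_{j}(-1)^j\binom{i}{j}\binom{n-i}{d-j}$ by recognizing it as the Cauchy product of the coefficients of $(1-x)^i$ and $(1+x)^{n-i}$ — and (ii) justifying the partial-sum-to-coefficient trick, which is a one-line formal power series manipulation. The genuinely hard analytic work (sorting the $\lambda_S$, counting multiplicities via \Cref{lem:logconvex}, and the Stirling estimate for $r_n$) comes afterward and builds on this proposition; the proposition itself is just the clean combinatorial reformulation that sets up \Cref{cor:asympbeta} by rewriting $[x^{2m}](1-x)^{k-1}(1+x)^{4m-k}$ as a Beta-function integral via \Cref{lem:coeff} and \Cref{lem:trigbeta}.
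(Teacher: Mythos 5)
Your proof is correct, and it takes a somewhat different (and arguably slicker) route than the paper's. The paper works directly with the double sum $\sum_{k=0}^{2m}\sum_{j=0}^{k}(-1)^j\binom{|S|}{j}\binom{n-|S|}{k-j}$, reindexes the triangular region $\{0\le j\le k\le 2m\}$ to $\{j,k\ge 0,\; j+k\le 2m\}$, evaluates the inner alternating partial sum via Fact~\ref{lem:altsum} ($\sum_{j=0}^{D}(-1)^j\binom{n}{j}=(-1)^D\binom{n-1}{D}$), and only at the last step recognizes the resulting convolution $\sum_{k}(-1)^k\binom{|S|-1}{k}\binom{n-|S|}{2m-k}$ as $[x^{2m}](1-x)^{|S|-1}(1+x)^{n-|S|}$. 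You instead start from the Krawtchouk generating function $\sum_d K_d(i)x^d=(1-x)^i(1+x)^{n-i}$ and apply the standard partial-sum-to-coefficient trick $\sum_{d\le D}[x^d]Q=[x^D]\tfrac{Q(x)}{1-x}$, which immediately cancels one factor of $1-x$ and gives the claim in one line. These are essentially the same formal-power-series computation in two different guises — in fact Fact~\ref{lem:altsum} is exactly your geometric-series trick applied to $Q=(1-x)^n$ — but your version makes the structure transparent and avoids the explicit reindexing bookkeeping. One small note: the paper's derivation is self-contained at the level of binomial identities, whereas yours leans on the Krawtchouk generating function as a cited fact; if you want to match the paper's level of self-containment, the one-line derivation of that generating function from the Cauchy-product definition of $K_d$ (which you sketch parenthetically) should be included. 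Your remark about the $S=\emptyset$ edge case being a well-defined formal power series is a nice touch that the paper leaves implicit.
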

    \begin{proof}
        \begin{align*}
            \lambda_S& = \sum_{k=0}^{2m} \sum_{j=0}^{k}(-1)^{j}\binom{|S|}{j}\binom{n-|S|}{k-j}\\
            &= \sum_{0 \leq j \leq k \leq 2m} (-1)^{j}\binom{|S|}{j}\binom{n-|S|}{k-j}\\
            &= \sum_{0 \leq j+k \leq 2m}(-1)^{j}\binom{|S|}{j}\binom{n-|S|}{k}\\
            &= \sum_{k=0}^{2m}\binom{n-|S|}{k} \sum_{j=0}^{2m-k}(-1)^{j}\binom{|S|}{j}\\
            &= \sum_{k=0}^{2m}\binom{n-|S|}{k} (-1)^{2m-k}\binom{|S|-1}{2m-k} \ (\Cref{lem:altsum})\\
            &= \sum_{k=0}^{2m}(-1)^k\binom{|S|-1}{k}\binom{n-|S|}{2m-k}\\
            &=[x^{2m}](1-x)^{|S|-1}(1+x)^{n-|S|}
        \end{align*}
    \end{proof}
    
    \begin{proposition}\label{prop:formula}
        \[\lambda_S = \begin{cases}
            \frac{2^{4m-1}}{\pi}(-1)^{\frac{|S|}{2}}B(\frac{|S|+1}{2},\frac{4m+1-|S|}{2}),&\ |S|\text{ even}\\
            \frac{2^{4m-1}}{\pi}(-1)^{\frac{|S|-1}{2}}B(\frac{|S|}{2},\frac{4m+2-|S|}{2}),& \ |S|\text{ odd}
        \end{cases}\]
    \end{proposition}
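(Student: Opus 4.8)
The plan is to start from the generating-function expression $\lambda_S = [x^{2m}](1-x)^{|S|-1}(1+x)^{4m-|S|}$ established in Proposition \ref{prop:genfunc}, and convert the coefficient extraction into a trigonometric integral via Fact \ref{lem:coeff}. Writing $x = e^{i\theta}$, I would use the standard identities $1 - e^{i\theta} = -2i\sin(\theta/2)e^{i\theta/2}$ and $1 + e^{i\theta} = 2\cos(\theta/2)e^{i\theta/2}$, so that the integrand $(1-x)^{|S|-1}(1+x)^{4m-|S|}$ factors into a power of $2$, a power of $(-i)$, a power of $\sin(\theta/2)$, a power of $\cos(\theta/2)$, and a single exponential $e^{i(|S|-1+4m-|S|)\theta/2} = e^{i(4m-1)\theta/2}$. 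Combined with the $e^{-2im\theta}$ from Fact \ref{lem:coeff}, the leftover exponential is $e^{-i\theta/2}$, which I would expand as $\cos(\theta/2) - i\sin(\theta/2)$. Then I substitute $\phi = \theta/2$ to bring the integral onto $[0,\pi]$, and exploit the symmetry $\phi \mapsto \pi - \phi$ (under which $\sin\phi$ is preserved and $\cos\phi$ flips sign) to kill either the even or the odd part of the integrand depending on the parity of $|S|$; only the surviving part integrates to something nonzero over $[0,\pi/2]$ after doubling.

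Concretely, after this reduction the integral takes the form $\int_0^{\pi/2}(\sin\phi)^{a}(\cos\phi)^{b}\,d\phi$ for appropriate exponents $a,b$ depending on the parity of $|S|$, which Fact \ref{lem:trigbeta} identifies with $\tfrac12 B\!\left(\tfrac{a+1}{2},\tfrac{b+1}{2}\right)$. When $|S|$ is even, $(1-x)^{|S|-1}$ contributes $\sin^{|S|-1}$ and the extra $-i\sin\phi$ from $e^{-i\theta/2}$ bumps the sine exponent to $|S|$ and contributes a cosine exponent $4m-|S|$, giving $B\!\left(\tfrac{|S|+1}{2}, \tfrac{4m-|S|+1}{2}\right)$; when $|S|$ is odd, instead the $\cos\phi$ part of $e^{-i\theta/2}$ survives, yielding sine exponent $|S|-1$ and cosine exponent $4m-|S|+1$, i.e.\ $B\!\left(\tfrac{|S|}{2}, \tfrac{4m-|S|+2}{2}\right)$. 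The prefactor $\tfrac{1}{2\pi}$ from Fact \ref{lem:coeff}, the $2^{(|S|-1)+(4m-|S|)} = 2^{4m-1}$ from the two binomial bases, the factor $2$ from Fact \ref{lem:trigbeta}, and the $\tfrac12$ from the substitution $\phi = \theta/2$ together collapse to $\tfrac{2^{4m-1}}{\pi}$; the powers of $(-i)$ and $i$ collapse to the stated signs $(-1)^{|S|/2}$ or $(-1)^{(|S|-1)/2}$.

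The main obstacle is purely bookkeeping: tracking the powers of $i$ and the $\pm$ signs through the factorizations of $1\pm e^{i\theta}$ so that the final phase is real and equals exactly the claimed $(-1)^{\lfloor |S|/2\rfloor}$, and verifying that the $\phi \mapsto \pi-\phi$ symmetry argument correctly discards the vanishing part rather than the surviving one. There is also a mild edge case when $|S| = 0$, but since the statement only concerns $\emptyset \ne S$ (indeed $\lambda_\emptyset$ was handled separately earlier), and the binomial $(1-x)^{|S|-1}$ has a genuine negative exponent only at $|S|=0$, this is outside the proposition's scope. Once the phase and the symmetry reduction are pinned down, the identification with the Beta function is immediate from Fact \ref{lem:trigbeta}, and no further estimation is needed here — the asymptotics are deferred to the subsequent application of Corollary \ref{cor:asympbeta}.
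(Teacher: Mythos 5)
Your proposal follows essentially the same route as the paper's proof: start from Proposition~\ref{prop:genfunc}, apply Fact~\ref{lem:coeff}, factor $1\pm e^{i\theta}$ into modulus times phase, expand the leftover $e^{-i\theta/2}$ as $\cos(\theta/2)-i\sin(\theta/2)$, halve the angle, kill one of the two integrals by parity/symmetry, and identify the survivor with a Beta function via Fact~\ref{lem:trigbeta}. One small bookkeeping slip in your constant-tracking: the substitution $\phi=\theta/2$ gives $d\theta = 2\,d\phi$ (a factor of $2$, not $\tfrac12$), the symmetry reduction $\int_0^\pi = 2\int_0^{\pi/2}$ contributes another $2$, and Fact~\ref{lem:trigbeta} then gives a $\tfrac12$; as written your factors multiply to $\tfrac{2^{4m-1}}{2\pi}$ rather than the correct $\tfrac{2^{4m-1}}{\pi}$, though the exponents, the Beta-function arguments, and the sign $(-1)^{\lfloor |S|/2\rfloor}$ are all right.
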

    \begin{proof}
        By \Cref{prop:genfunc} and \Cref{lem:coeff}
        \begin{align*}
            \lambda_S &= \frac{1}{2\pi}\int_{0}^{2\pi}e^{-2mi\theta}(1-e^{i\theta})^{|S|-1}(1+e^{i\theta})^{4m-|S|}d\theta\\
            &=\frac{1}{2\pi}\int_{0}^{2\pi}e^{-2mi\theta}(2\sin\frac{\theta}{2}e^{i(\frac{\theta}{2}-\frac{\pi}{2})})^{|S|-1}(2\cos\frac{\theta}{2}e^{i\frac{\theta}{2}})^{4m-|S|}d\theta\\
            &=\frac{2^{4m-1}}{2\pi}i^{1-|S|}\int_{0}^{2\pi}e^{-i\frac{\theta}{2}}(\sin\frac{\theta}{2})^{|S|-1}(\cos\frac{\theta}{2})^{4m-|S|}d\theta\\
            &=\frac{2^{4m-2}}{\pi}i^{1-|S|}(\int_{0}^{2\pi}(\sin\frac{\theta}{2})^{|S|-1}(\cos\frac{\theta}{2})^{4m+1-|S|}d\theta-i\int_{0}^{2\pi}(\sin\frac{\theta}{2})^{|S|}(\cos\frac{\theta}{2})^{4m-|S|}d\theta)\\
            &=\frac{2^{4m-1}}{\pi}i^{1-|S|}(\int_{0}^{\pi}(\sin\theta)^{|S|-1}(\cos\theta)^{4m+1-|S|}d\theta-i\int_{0}^{\pi}(\sin\theta)^{|S|}(\cos\theta)^{4m-|S|}d\theta)
        \end{align*}
        Note that when $|S|$ is odd, $\int_{0}^{\pi}(\sin\theta)^{|S|}(\cos\theta)^{4m-|S|}d\theta=0$, when $|S|$ is even, $\int_{0}^{\pi}(\sin\theta)^{|S|-1}(\cos\theta)^{4m+1-|S|}d\theta=0$, the result follows from \Cref{lem:trigbeta}, parity casework and symmetry.
    \end{proof}
    \begin{remark}
        When $n=4m+1$ or $4m+3$, there's a uniform expression.
    \end{remark}

    For notational convenience, we define $\lambda(|S|):=\lambda_S$, as it depends only on $|S|$.

    \begin{proposition}\label{prop:sorteigen}
    The positive eigenvalues are:
    \[\mu_1:=\lambda(1)=\lambda(4m)\]
    \[\mu_{4k+1}:=\lambda(4k)=\lambda(4k+1)=\lambda(4m-4k+1)=\lambda(4m-4k)\]
    The negative eigenvalues are:
    \[-\mu_{4k+3}:=\lambda(4k+2)=\lambda(4k+3)=\lambda(4m-4k-1)=\lambda(4m-4k-2)\]
    where
    \[\mu_{2k+1} = \frac{2^{4m-1}}{\pi}B(\frac{2k+1}{2},\frac{4m+1-2k}{2})\]
    \[\mu_1 > \mu_3 > \mu_5 > ... > \mu_{2m+1}\]
    are the distinct absolute values of eigenvalues, sorted in descending order.
    \end{proposition}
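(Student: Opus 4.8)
The goal is to establish Proposition~\ref{prop:sorteigen}, which has two components: first, a combinatorial identification of which $\lambda(i)$ values coincide and which are positive versus negative; and second, the strict monotonicity $\mu_1 > \mu_3 > \cdots > \mu_{2m+1}$ of the distinct absolute values.

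\textbf{Step 1: Collapsing the formula to a single Beta expression.} Starting from Proposition~\ref{prop:formula}, I would first observe that the even and odd cases can be merged. For $|S|$ even, $|S| = 2k$, the argument is $(-1)^k B\!\left(\tfrac{2k+1}{2}, \tfrac{4m+1-2k}{2}\right)$; for $|S|$ odd, $|S| = 2k+1$, it is $(-1)^k B\!\left(\tfrac{2k+1}{2}, \tfrac{4m+1-2k}{2}\right)$ as well (writing $\tfrac{|S|}{2} = \tfrac{2k+1}{2}$ and $\tfrac{4m+2-|S|}{2} = \tfrac{4m+1-2k}{2}$). So $\lambda(2k)$ and $\lambda(2k+1)$ are \emph{equal}, both equal to $\tfrac{2^{4m-1}}{\pi}(-1)^k B\!\left(\tfrac{2k+1}{2}, \tfrac{4m+1-2k}{2}\right)$. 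Setting $\mu_{2k+1} := \tfrac{2^{4m-1}}{\pi} B\!\left(\tfrac{2k+1}{2}, \tfrac{4m+1-2k}{2}\right) > 0$ (the Beta function is positive on positive arguments), the sign is governed purely by the parity of $k$: $\lambda(2k) = \lambda(2k+1) = (-1)^k \mu_{2k+1}$. This immediately yields the claimed groupings once I also use the symmetry $\lambda(i) = \lambda(4m+1-i)$. That symmetry follows either directly from the generating-function form in Proposition~\ref{prop:genfunc} via the substitution $x \mapsto -x$ (which swaps $(1-x)^{|S|-1}(1+x)^{4m-|S|}$ with $(1+x)^{|S|-1}(1-x)^{4m-|S|}$ up to the sign $(-1)^{2m}=1$ on the extracted coefficient — a short check), or from the Beta-function symmetry $B(a,b) = B(b,a)$ applied to Proposition~\ref{prop:formula}. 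Pairing $2k \leftrightarrow 4m-2k$ and $2k+1 \leftrightarrow 4m-2k+1$ then gives the four-element orbits $\{\lambda(4k), \lambda(4k+1), \lambda(4m-4k), \lambda(4m-4k+1)\}$ with sign $(-1)^{2k} = +$ and $\{\lambda(4k+2), \lambda(4k+3), \lambda(4m-4k-2), \lambda(4m-4k-1)\}$ with sign $(-1)^{2k+1} = -$, matching the statement (with the endpoint orbit $\{\lambda(1),\lambda(4m)\}$ being the $k=0$ degenerate case of size two). I should be careful to check the multiplicities add up: each $\lambda(i)$ carries multiplicity $\binom{n}{i}$ from Lemma~\ref{lem:inner_prod_spectrum}, but for the ordering argument in Lemma~\ref{lem:rem2} only the value-level sorting matters here.

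\textbf{Step 2: Strict monotonicity of $\mu_{2k+1}$.} I need $B\!\left(\tfrac{2k+1}{2}, \tfrac{4m+1-2k}{2}\right)$ to be strictly decreasing in $k$ for $k = 0, 1, \dots, m$. Note the two arguments sum to $\tfrac{4m+2}{2} = 2m+1$, a constant, and as $k$ runs from $0$ to $m$ the first argument $\tfrac{2k+1}{2}$ increases from $\tfrac12$ toward $m+\tfrac12$, i.e. it moves toward the midpoint $m+\tfrac12$ of the fixed sum. By Fact~\ref{lem:logconvex}, with $x+y$ fixed, $B(x,y)$ is unimodal and \emph{minimized} at $x = y$; equivalently it is strictly decreasing as $x$ moves from the extreme toward the center. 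Since $\tfrac{2k+1}{2} < \tfrac{2(k+1)+1}{2} \le m + \tfrac12$ for $k < m$, consecutive terms satisfy $\mu_{2k+1} > \mu_{2k+3}$, giving the full chain $\mu_1 > \mu_3 > \cdots > \mu_{2m+1}$. The $k=m$ term is $\mu_{2m+1} = \tfrac{2^{4m-1}}{\pi} B\!\left(m+\tfrac12, m+\tfrac12\right)$, the global minimum, which is still strictly positive.

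\textbf{Step 3: Positivity of $\lambda(0) = \lambda_\emptyset$ consistency and completeness.} I would close by noting that $|S|$ ranges over $\{0, 1, \dots, 4m\}$, and the orbits above — $\{0,1,4m\}$-type grouping for the principal block is handled separately since $\lambda_\emptyset$ is the giant eigenvalue treated earlier — exhaust all values $|S| \in \{1, \dots, 4m\}$ exactly once each (a parity/range bookkeeping check: the positive orbits use $k = 0, 1, \dots$ with $4k \le 2m$ roughly, the negative ones similarly, and their union covers $\{1,\dots,4m\}$). This confirms the list of positive and negative eigenvalues is complete.

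The main obstacle is not any single deep step but rather the bookkeeping in Step~1: correctly merging the even/odd cases of Proposition~\ref{prop:formula} into one clean formula, nailing down the symmetry $\lambda(i) = \lambda(4m+1-i)$, and verifying that the stated four-element orbits partition $\{1, \dots, 4m\}$ without overlap or omission (including the degenerate endpoint orbit). Once the single-formula form $\lambda(2k)=\lambda(2k+1)=(-1)^k\mu_{2k+1}$ with $\mu_{2k+1} = \tfrac{2^{4m-1}}{\pi} B\!\left(\tfrac{2k+1}{2}, \tfrac{4m+1-2k}{2}\right)$ is in hand, the monotonicity in Step~2 is an immediate consequence of log-convexity of $\Gamma$ (Fact~\ref{lem:logconvex}) and requires no computation.
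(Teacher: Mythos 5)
Your proposal is correct and follows exactly the route the paper intends — the paper's proof is simply the one-line remark ``consequence of Proposition~\ref{prop:formula} and Fact~\ref{lem:logconvex},'' and you have supplied the missing details (merging the even/odd cases, the $\lambda(i)=\lambda(4m+1-i)$ symmetry via either the generating function or $B(a,b)=B(b,a)$, and strict monotonicity from log-convexity at fixed argument sum). One small point worth flagging: your derivation gives $\mu_{2k+1}=\tfrac{2^{4m-1}}{\pi}B\bigl(\tfrac{2k+1}{2},\tfrac{4m+1-2k}{2}\bigr)$, and this is the expression the paper itself uses in the subsequent estimate, so the ``$4m+1-k$'' appearing in the proposition statement is a typo; you should also explicitly note that Proposition~\ref{prop:formula} is only valid for $|S|\ge 1$ (the contour-integral derivation breaks for $|S|=0$), which is why $\lambda_\emptyset$ is excluded and the $k=0$ orbit degenerates to $\{\lambda(1),\lambda(4m)\}$.
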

    \begin{proof}
        Consequence of \Cref{prop:formula} and \Cref{lem:logconvex}
    \end{proof}
    \begin{proposition}\label{prop:counteigen}
        \[n_{2k+1}:=|\{S\subseteq 2^{[n]}:\lambda_S = (-1)^k\mu_{2k+1}\}|\]
        Then
        \[n_1<n_3<...<n_{2m+1} \ \text{and} \ n_{2k+1}<4\binom{4m}{2k+1}\]
    \end{proposition}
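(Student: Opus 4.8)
The plan is to make $n_{2k+1}$ completely explicit as a sum of binomial coefficients and then obtain everything except one delicate comparison from monotonicity of $\binom{4m}{\cdot}$. Since $\lambda_S = \lambda(|S|)$ and the eigenvalue $\lambda(j)$ occurs with multiplicity $\binom{4m}{j}$ (\Cref{lem:inner_prod_spectrum}), $n_{2k+1}$ is the sum of $\binom{4m}{j}$ over those $j \in \{1,\dots,4m\}$ with $\lambda(j) = (-1)^k\mu_{2k+1}$. By \Cref{prop:formula} and \Cref{prop:sorteigen}, $|\lambda(j)|$ depends only on $\lfloor j/2\rfloor$ and equals $\mu_{2k+1}$ exactly when $\lfloor j/2\rfloor \in \{k,\,2m-k\}$; the one point to watch is that $j=0$ is excluded, since $\lambda_{\emptyset}$ dwarfs every $\mu_{2k+1}$. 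Listing the relevant $j$'s and using $\binom{4m}{4m-i}=\binom{4m}{i}$ gives $n_1 = \binom{4m}{1}+\binom{4m}{4m} = 4m+1$, then $n_{2m+1} = \binom{4m}{2m}+\binom{4m}{2m+1}$, and, for $1\le k\le m-1$, $n_{2k+1} = 2\binom{4m}{2k} + \binom{4m}{2k-1} + \binom{4m}{2k+1}$. Note that $n_{2m+1}$ loses the factor $2$ because at $k=m$ the two index pairs $\{2k,2k+1\}$ and $\{4m-2k,4m-2k+1\}$ merge; this asymmetry is precisely what makes the last link of the chain the hard case.

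With these formulas in hand, the bound $n_{2k+1} < 4\binom{4m}{2k+1}$ is routine: for $1\le k\le m-1$ the indices $2k-1,2k,2k+1$ all lie in $\{0,\dots,2m\}$, on which $\binom{4m}{\cdot}$ is increasing, so each of the (at most four) summands of $n_{2k+1}$ is $\le \binom{4m}{2k+1}$, with two of them strictly smaller; the endpoints $k=0$ ($4m+1<16m$) and $k=m$ (where $\binom{4m}{2m}+\binom{4m}{2m+1} = \bigl(1+\tfrac{2m+1}{2m}\bigr)\binom{4m}{2m+1} < 3\binom{4m}{2m+1}$, using $\binom{4m}{2m}=\tfrac{2m+1}{2m}\binom{4m}{2m+1}$) are direct. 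The interior links of the chain, $n_{2k+1}<n_{2k+3}$ for $1\le k\le m-2$, also follow from monotonicity: there both sides have the four-summand form, and the index multiset $\{2k-1,2k,2k,2k+1\}$ of $n_{2k+1}$ is pointwise strictly dominated by $\{2k+1,2k+2,2k+2,2k+3\}$ (match $2k-1\mapsto 2k+1$, $2k\mapsto 2k+2$ twice, $2k+1\mapsto 2k+3$), all indices still lying strictly below the center $2m$. The first link $n_1<n_3$ is immediate, $n_1$ being linear in $m$ while $n_3 = 2\binom{4m}{2}+\binom{4m}{1}+\binom{4m}{3}$ is at least quadratic.

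The main obstacle is the final link $n_{2m-1}<n_{2m+1}$, where the monotonicity argument no longer applies and where I expect essentially all of the work to lie. After cancelling the common term $\binom{4m}{2m-1}=\binom{4m}{2m+1}$, it is equivalent to the sharp inequality $2\binom{4m}{2m-2}+\binom{4m}{2m-3} < \binom{4m}{2m}$, comparing a small cluster of near-central binomial coefficients against the central one. I would attack this through the explicit ratios $\binom{4m}{2m-i}\big/\binom{4m}{2m} = \prod_{\ell=1}^{i}\frac{2m-\ell+1}{2m+\ell}$ for $i=2,3$, estimating the factors and summing the resulting expression; controlling this margin with the correct constant is the crux of the proof and the step I flag as the principal difficulty.
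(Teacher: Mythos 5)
Your explicit formulas for the multiplicities are correct: with $\lambda(j)$ having multiplicity $\binom{4m}{j}$ and $|\lambda(j)|=\mu_{2k+1}$ exactly for $j\in\{2k,2k+1,4m-2k,4m-2k+1\}$, one gets $n_1=4m+1$, $n_{2k+1}=2\binom{4m}{2k}+\binom{4m}{2k-1}+\binom{4m}{2k+1}$ for $1\le k\le m-1$, and $n_{2m+1}=\binom{4m}{2m}+\binom{4m}{2m+1}$ (these sum to $2^{4m}-1$, as they should). Your proofs of the bound $n_{2k+1}<4\binom{4m}{2k+1}$ and of the interior links $n_{2k+1}<n_{2k+3}$ for $k\le m-2$ via monotonicity of $\binom{4m}{\cdot}$ on $\{0,\dots,2m\}$ are correct. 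The paper itself supplies no proof of this proposition, so there is nothing to compare against on that front.

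However, the step you flag as the ``principal difficulty'' is not merely difficult --- it is false, and so is the last link of the chain as stated in the proposition. Your reduction of $n_{2m-1}<n_{2m+1}$ to $2\binom{4m}{2m-2}+\binom{4m}{2m-3}<\binom{4m}{2m}$ is correct, but $\binom{4m}{2m-2}/\binom{4m}{2m}=\frac{2m(2m-1)}{(2m+1)(2m+2)}>\tfrac12$ for $m\ge 3$, so already $2\binom{4m}{2m-2}>\binom{4m}{2m}$. Concretely, for $m=3$ (so $n=12$) one has $n_5=2\binom{12}{4}+\binom{12}{5}+\binom{12}{3}=2002$ while $n_7=\binom{12}{6}+\binom{12}{7}=1716$, so $n_{2m-1}>n_{2m+1}$; the same happens for every $m\ge 3$ (the claim holds only for $m\le 2$). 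No proof strategy can close this gap, because the statement being proved is wrong at that link. The part of the proposition that is used quantitatively in the proof of \Cref{lem:rem2} is the inequality $n_{2k+1}<4\binom{4m}{2k+1}$, which you do establish; but the monotonicity claim, which the paper invokes to set up the pairing bound $-r_n\le \frac{2}{N^2}\sum_k n_{2k+1}\mu_{2k+1}\mu_{2k+3}$, needs to be either corrected (e.g.\ restricted to $k\le m-1$, with the $k=m$ block handled separately) or replaced by whatever weaker alignment property that pairing argument actually requires.
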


\subsection{Proof of \Cref{lem:rem2}}
    By \Cref{prop:sorteigen} and \Cref{prop:counteigen}, we have
    \[-r_n \leq \frac{2}{N^2}\sum_{k=0}^{m}n_{2k+1}\mu_{2k+1}\mu_{2k+3}\]
    By \Cref{prop:formula}, \Cref{prop:sorteigen}, \Cref{prop:counteigen} and \Cref{cor:asympbeta},
    \begin{align*}
        \frac{2}{N^2}n_{2k+1}\mu_{2k+1}\mu_{2k+3}&=\frac{1}{2\pi^2}n_{2k+1}B(\frac{2k+1}{2},\frac{4m+1-2k}{2})B(\frac{2k+3}{2},\frac{4m-1-2k}{2})\\
        &\leq \frac{2}{\pi^2}\binom{4m}{2k+1}B(\frac{2k+1}{2},\frac{4m+1-2k}{2})B(\frac{2k+3}{2},\frac{4m-1-2k}{2})\\
        &= \frac{2}{\pi^2}\frac{1}{4m+1}\frac{B(\frac{2k+1}{2},\frac{4m+1-2k}{2})B(\frac{2k+3}{2},\frac{4m-1-2k}{2})}{B(2k+2,4m-2k)}\\
        &\leq \frac{2C^3}{\pi^2}\frac{1}{4m+1}\frac{\frac{(\frac{2k+1}{2})^{k}(\frac{4m+1-2k}{2})^{2m-k}}{(2m+1)^{2m+\frac{1}{2}}}\frac{(\frac{2k+3}{2})^{k+1}(\frac{4m-1-2k}{2})^{2m-k-1}}{(2m+1)^{2m+\frac{1}{2}}}}{\frac{(2k+2)^{2k+\frac{3}{2}}(4m-2k)^{4m-2k-\frac{1}{2}}}{(4m+2)^{4m+\frac{3}{2}}}}\\
        &=\frac{4C^3}{\pi^2}\frac{\sqrt{4m+2}}{4m+1}\frac{1}{\sqrt{(2k+2)(4m-2k)}}\frac{(2k+1)^{k}(2k+3)^{k+1}}{(2k+2)^{2k+1}}\\
        &\frac{(4m+1-2k)^{2m-k}(4m-1-2k)^{2m-k-1}}{(4m-2k)^{4m-2k-1}}\\
        &\leq \frac{C_1}{\sqrt{4m}}\frac{1}{2m+1}\frac{1}{\sqrt{\frac{2k+2}{4m+2}(1-\frac{2k+2}{4m+2})}}
    \end{align*}
    Therefore,
    \begin{align*}
        -r_n &\leq \frac{2}{N^2}\sum_{k=0}^{m}n_{2k+1}\mu_{2k+1}\mu_{2k+3}\\
        &\leq \frac{C_1}{\sqrt{4m}}\sum_{k=0}^{m}\frac{1}{2m+1}\frac{1}{\sqrt{\frac{2k+2}{4m+2}(1-\frac{2k+2}{4m+2})}}
    \end{align*}
    Since
    \[\lim_{m \to +\infty}\sum_{k=0}^{m}\frac{1}{2m+1}\frac{1}{\sqrt{\frac{2k+2}{4m+2}(1-\frac{2k+2}{4m+2})}} = \int_0^{\frac{1}{2}}\frac{dx}{\sqrt{x(1-x)}}=\frac{\pi}{2}\]
    We conclude that
    \[-r_n \leq \frac{C_2}{\sqrt{4m}}=\frac{C_2}{\sqrt{n}}=o(1)\]
    as desired.
\section{Conclusion}

In this work, we resolved the conjecture of Rob Morris, showing that for any bijection 
\(f : \{-1,1\}^n \to \{-1,1\}^n\), the probability  
\[
\Pr_{x,y \in \{-1,1\}^n}\big[ \langle x,y \rangle \ge 0 \;\text{and}\; \langle f(x), f(y) \rangle \ge 0 \big]
 \geq \frac{1}{4} - O(\frac{1}{\sqrt{n}}).\]
 Our proof proceeds by analyzing the spectrum of the Hamming association scheme, and reducing the problem to optimization over the Birkhoff polytope. Some interesting future directions and open questions are as follows:

\begin{problem}
Extend the result to the setting of \(r > 2\) bijections. That is, given any collection of $r$ bijections $f_1, f_2, \cdots, f_r$, what can we say about 
\[
\Pr_{x,y \in \{-1,1\}^n} \!\left[ \bigwedge_{i=1}^r \langle f_i(x), f_i(y) \rangle \ge 0 \right]?
\]
\end{problem}
Our methodology completely resolves the $r = 2$ case, but for $r > 2$, it is not clear how to extend it. In particular, it is not too difficult to generalize our method to the $r$-variable case and attain the lower bound
\[
\Pr_{x,y \in \{-1,1\}^n} \!\left[ \bigwedge_{i=1}^r \langle f_i(x), f_i(y) \rangle \ge 0 \right] \geq \bigg(\frac{1}{2}\bigg)^r + \frac{1}{N^r}\min_{X \in \mathcal{D}^{(r)}_N} \langle X, \lambda^{\otimes r} \rangle
\]
where $\lambda \in \mathbb{R}^N$ has its $i^{th}$ entry corresponding to the $i^{th}$ largest eigenvalue of $M$ and $\mathcal{D}^{(r)}_N$ is the set of tensors given by \[
\mathcal{D}^{(r)}_N \;=\; 
\Bigl\{ \, T \in \mathbb{R}_{\ge 0}^{N^r} \;:\;
\sum_{j=1}^N T_{i_1,\dots,i_{k-1},j,i_{k+1},\dots,i_r} = 1 
\;\;\text{for all } k \in [r],\;\; (i_1,\dots,i_{k-1},i_{k+1},\dots,i_r) \in [N]^{r-1} \,\Bigr\}, 
\]
which is the natural extension of doubly stochastic matrices to higher-order tensors, requiring that every axis-parallel line sums to \(1\). The main difficulty lies in lower bounding the linear program  
\[
\frac{1}{N^r}\min_{X \in \mathcal{D}^{(r)}_N} \langle X, \lambda^{\otimes r} \rangle,
\]  
since for \(r > 3\) no analytic solution is available. Numerical evidence suggests that \(-o(1)\) remains a valid lower bound, but a rigorous proof is still missing. Moreover, while the tensor structure of the LP ensures—via a standard cycling argument—the existence of an integral solution, the absence of a suitable ``high-dimensional rearrangement inequality'' prevents us from characterizing the objective explicitly.
\\\\
Another related research direction (proposed by Rob Morris) is the following:
\\\\
\begin{problem}
Can we characterize subsets \(A,B \subseteq \{0,1\}^n\) for which analogous lower bounds continue to hold? In particular, the lemma in \cite{balister2024upperboundsmulticolourramsey} implies that if \(f : A \to B\) is a bijection and 
\[
\Pr\!\big[ \langle x,y \rangle \ge 0 \;\text{and}\; \langle f(x),f(y)\rangle \ge 0 \big] = o(1),
\]
then one of \(A\) or \(B\) must be ``clustered.'' Can this condition be weakened to only requiring the probability to be less than \(1/4 - c\) for some constant \(c > 0\)?
\end{problem}

\section{Acknowledgements}

This work was completed at PCMI 2025. As such, we would like to thank the organizers and many individuals involved with running the program. In particular, we would like to thank Noga Alon and Yuval Wigderson for insightful discussions regarding this problem.

\bibliographystyle{alpha}
\bibliography{ref}

\end{document}